\title{RSK-complete Cycle Decompositions}
\author{Agastya Goel} 
\address{(Agastya Goel): Euler Circle, Mountain View, CA 94040}
\email{goel.agastya@gmail.com}
\author{Simon Rubinstein-Salzedo}
\address{(Simon Rubinstein-Salzedo): Euler Circle, Mountain View, CA 94040}
\email{simon@eulercircle.com}
\date{\today}
\newcommand{\lf}{\left\lfloor}
\newcommand{\rf}{\right\rfloor}
\newcommand{\lc}{\left\lceil}
\newcommand{\rc}{\right\rceil}
\newcommand{\lp}{\left(}
\newcommand{\rp}{\right)}
\newtheorem{theorem}{Theorem}
\newtheorem{remark}[theorem]{Remark}
\newtheorem{lemma}[theorem]{Lemma}
\theoremstyle{definition}
\newtheorem{definition}[theorem]{Definition}
\theoremstyle{remark}
\newtheorem*{example}{Example}
\numberwithin{theorem}{section}
\begin{document}

\maketitle

\begin{abstract}
    We characterize the class of cycle decompositions that can achieve all Young tableau shapes (except the trivial ones with a single row or a single column) under the Robinson--Schensted--Knuth (RSK) correspondence, a property that we call RSK-completeness. We prove that for even $n$, cyclic permutations comprise the only fixed cycle decomposition that is RSK-complete. For odd $n$, cyclic permutations and almost cyclic permutations which have a cycle of length $n-1$ are the only RSK-complete cycle decompositions.
\end{abstract}

\section{Introduction}

The Robinson--Schensted--Knuth (RSK) correspondence, described in~\cite{robinson,schensted_1961,pjm}, is a way of mapping a sequence (typically, one representing a permutation) to a pair of Young tableaux. See~\cite{fulton} for a thorough description of the RSK correspondence and its main application, to the representation theory of the symmetric group. Both tableaux have the same shape, which we will call the \emph{RSK shape} of the sequence. The RSK correspondence has found numerous uses in combinatorics and group theory; see for instance~\cite{alma991004256029702915, kostka, Fomin1988}.

This correspondence encodes certain permutation statistics in easily visible ways. The one most directly relevant to our work is that the length of the longest increasing (respectively, decreasing) subsequence  of the permutation is the same as the length of the first row (respectively, column) of its RSK shape, described in~\cite{schensted_1961}. Further, this can be extended to the length of the largest possible union of $k$ increasing/decreasing subsequences, as shown in~\cite{GREENE1974254}. Given that some of the most interesting properties of the RSK-correspondence relate to the shape of the resulting tableaux, much work has been done in the past relating certain classes of permutations, such as Boolean permutations~\cite{GUNAWAN2022} and pattern-avoiding permutations~\cite{LEWIS20111436, SIMION1985383, Yan2012, Ouchterlony05}, to their RSK shapes. In this paper, we will consider classes of permutations based on their cycle decompositions.

We start with the following natural question: {\em what classes of permutations can produce all possible RSK shapes?} The RSK shapes that consist of a single row or a single column can only be achieved using the identity permutation and the reverse permutation respectively, so we will leave these two trivial cases aside. Then, we will define a class of permutations over $\{1, 2, \dots, n\}$ to be {\em RSK-complete} if permutations in this class can generate all remaining RSK shapes of that size.

Involutions are also easily visible under the RSK correspondence: the involutions are exactly the permutations for which the two RSK tableaux are the same~\cite{schutzenberger}. This means that the class of involutions is RSK-complete. Our focus in this paper will be on permutations with a {\em fixed} cycle decomposition. Specifically, we show that cyclic permutations, which have the simple cycle decomposition $(n)$, are RSK-complete, and for even $n$, this is the the only RSK-complete cycle decomposition. For odd $n$, there is just one more RSK-complete cycle decomposition, namely ``almost cyclic'' permutations with cycle decomposition $(n-1,1)$, i.e.\ those that are $(n-1)$-cycles. Thus, we obtain a complete characterization of RSK-complete cycle decompositions. 


In Section~\ref{sec:prelim}, we define the RSK correspondence and RSK-completeness formally and establish notation that we will use for the rest of the paper. In Section~\ref{sec:cyclic}, we show that cyclic permutations are RSK-complete. Section~\ref{sec:almost} shows that almost cyclic permutations are RSK-complete only for odd $n$. In Section~\ref{sec:converse}, we show that no other fixed cycle decomposition can be RSK-complete, thus yielding a complete characterization of RSK-complete cycle decompositions. 

In Section~\ref{sec:almost}, we also prove an additional connection between  almost cyclic permutations and the class of realizable RSK shapes when $n$ is even: that the class of realizable shapes includes everything except the two trivial shapes and the shape containing just two rows of size $\frac{n}{2}$ each. An interesting problem for future work is to characterize the class of RSK shapes achievable by any given cycle decomposition.


\section{Preliminaries}
\label{sec:prelim}


\begin{definition}
A \emph{Young diagram} is a subset of cells of a grid, with the cells in each row being left-justified (i.e.\ the row is filled from left to right), and the row lengths being nonincreasing. A \emph{Young tableau} is obtained by taking a Young diagram and filling each cell with a positive integer. We then say a Young tableau is \emph{semistandard} if its rows are weakly increasing and its columns are strictly increasing. If there are $n$ cells in our tableau and each cell is filled with a distinct integer in $\{1, 2, \ldots, n\}$ in addition to being semistandard, we say that the tableau is a \emph{standard Young tableau}, or \emph{SYT}.
\end{definition}

In this paper, we will be concerned primarily with studying the shapes of Young tableaux, which is also the underlying Young diagram:

\begin{definition}
The \emph{shape} of a Young tableau is its list of row lengths, sorted in nonincreasing order.
\end{definition}

Thus the shape of a Young tableau with $n$ elements is a partition of $n$. We can write a Young tableau as a sequence of sequences. For example, the SYT in Figure~\ref{fig:syt-basic} corresponds to the sequence $((1, 4, 5), (2, 6), (3))$, and
figure~\ref{fig:syt-shape} represents its shape.
For a Young tableaux $\mathfrak{T}$, let $\mathfrak{T}_r$ represent the $r^\text{th}$ row of $\mathfrak{T}$, let $\mathfrak{T}_{r, c}$ represent the $c^\text{th}$ element in $\mathfrak{T}_r$, and let $\Gamma(\mathfrak{T})$ represent its shape.

\begin{figure}
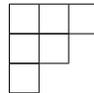

\begin{subfigure}{.45\textwidth}
  \centering
  \[\young(145,26,3)\]
  \caption{The SYT $((1, 4, 5), (2, 6), (3))$.}
  \label{fig:syt-basic}
\end{subfigure}
\begin{subfigure}{.45\textwidth}
  \centering
  \[\yng(3,2,1)\]
  \caption{The corresponding shape (3, 2, 1).}
  \label{fig:syt-shape}
\end{subfigure}
\caption{A simple standard Young tableau and its shape.}
\end{figure}

\begin{definition}
Given two sequences $\gamma^1$ and $\gamma^2$, we will let $\gamma^3 = \gamma^1+\gamma^2$ denote the sequence of length $\max(|\gamma^1|, |\gamma^2|)$ where $\gamma^3_i = \gamma^1_i+\gamma^2_i$ for $1 \leq i \leq \max(|\gamma^1|, |\gamma^2|)$. We implicitly let $\gamma_i = 0$ for $i > |\gamma|$.
\end{definition}

For example, $(3, 2, 1) + (1, 1) = (4, 3, 1)$.

Given any sequence $\sigma$, we will use $\sigma_i$ to refer to the $i^\text{th}$ element, and $\sigma[\ell, r]$ to refer to the subsequence $(\sigma_\ell,\dots,\sigma_r)$. We will represent a permutation over $\{1, 2, \ldots, n\}$ as a sequence, with $\sigma_i$ being the element in the $i^\text{th}$ position in the permutation.

Finally, we define  the \emph{Robinson--Schensted--Knuth correspondence}~\cite{robinson,schensted_1961,pjm} (or simply the RSK correspondence), which is the main topic of the paper. This is defined as two functions $P$ and $Q$, which take in a sequence as input. $P$ returns a tableau containing the elements of the sequence, and $Q$ returns one containing the elements $\{1, 2, \ldots, n\}$, where $n$ is the length of the sequence. We will not use $Q$ in this paper, and we are further only interested in SYTs. Thus, we will only consider the first tableau in the pair, and only the case where the integers in the sequence are distinct.\footnote{We will primarily be interested in applying this correspondence to permutations; however, in the process of constructing these correspondences, we will also have to consider subsequences of permutations, hence the use of sequences of distinct integers.}

\begin{definition}The \emph{Robinson--Schensted--Knuth correspondence} denoted by $P$ maps a sequence $\sigma$ of distinct integers to a Young tableau. It can be defined recursively as follows: First, define $P(\sigma[1,1])$ to be the SYT with only one row which contains exactly one cell, filled with the element $\sigma_1$. Now, assume that $\mathfrak{T} = P(\sigma[1, c])$ has already been computed. To compute $P(\sigma[1, c+1])$, we use the displacement procedure. If the element to insert is $a$, and $a$ is larger than the last element in the first row of $\mathfrak{T}$, put $a$ at the end of the first row and terminate. Else, let $b$ be the leftmost element greater than $a$ in the first row. Replace $b$ with $a$ and insert $b$ into the next row recursively.
\end{definition}

\begin{example} Consider the permutation $\sigma = (2, 5, 1, 4, 6, 3)$. Then, $P(\sigma[1, 4])$ is the following tableau:
\[\young(14,25).\]
When we add $6$, it is the largest element in the first row, so we simply add it to the end of the first row and stop. Hence $P(\sigma[1, 5])$ is
\[\young(146,25).\]
When we next add $3$, it displaces $4$ from the first row, which displaces $5$ from the second row, which in turn starts a new row, and we get the following tableau as $P(\sigma)$:
\[\young(136,24,5).\] \end{example}

We also will use the following theorem about SYTs.

\begin{theorem}[Schensted~\cite{schensted_1961}]
\label{Schensted’s Theorem}
The length of the first row of $P(\sigma)$ is equal to the longest increasing subsequence (LIS) of $\sigma$, and the length of the first column of $P(\sigma)$ is equal to the longest decreasing subsequence (LDS) of $\sigma$.
\end{theorem}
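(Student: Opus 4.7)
The plan is to prove each half of the theorem by maintaining through the RSK insertion procedure an invariant that attaches to each entry in the first row (respectively first column) of the growing tableau an explicit increasing (respectively decreasing) subsequence of the prefix of $\sigma$ inserted so far. For the first-row claim I would prove by induction on $i$ the invariant: \emph{if, after inserting $\sigma[1,i]$, the value $a$ sits in column $j$ of the first row of $P(\sigma[1,i])$, then $\sigma[1,i]$ contains an increasing subsequence of length $j$ ending at $a$}. The step is a direct check against the displacement procedure: when $\sigma_{i+1}$ lands in column $j$ of the first row, either by appending or by bumping a larger entry, the former tenant of column $j-1$ (if $j>1$) was strictly smaller than $\sigma_{i+1}$ and carried a length-$(j-1)$ witness by induction, which extends by appending $\sigma_{i+1}$; columns untouched by the chain keep their prior witnesses. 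Applied to the rightmost filled column, this yields $\mathrm{LIS}(\sigma) \geq \Gamma(P(\sigma))_1$. The reverse inequality follows from a parallel induction: given an increasing subsequence $\sigma_{i_1}<\cdots<\sigma_{i_k}$, after step $i_t$ the first row has length at least $t$ and the entry in column $t-1$ is at most $\sigma_{i_{t-1}}$, so that when $\sigma_{i_t}>\sigma_{i_{t-1}}$ is inserted it either appends or bumps some entry in column $\geq t$.

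For the first-column claim I would prove the analogous invariant: \emph{after inserting $\sigma[1,i]$, if $P(\sigma[1,i])$ has $r$ rows, then $\sigma[1,i]$ contains a decreasing subsequence of length $r$}. The key inductive step is when $\sigma_{i+1}$'s bumping chain creates a new row: the chain produces values $\sigma_{i+1}=x_1<x_2<\cdots<x_r$, where each $x_s$ for $s\geq 2$ was previously a term of $\sigma$ inserted at some earlier position $j_s$ before getting displaced into its current cell, and reading these values at positions $j_r<j_{r-1}<\cdots<j_1=i+1$ yields the desired length-$r$ decreasing subsequence. For the reverse inequality I would argue that, given a decreasing subsequence $\sigma_{i_1}>\cdots>\sigma_{i_k}$, at each step $i_t$ either the tableau already has at least $t$ rows or the chain for $\sigma_{i_t}$ creates a new one, using the fact that the previously inserted $\sigma_{i_1},\ldots,\sigma_{i_{t-1}}$ occupy positions in the tableau that prevent the chain from terminating before depth $t$.

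The main obstacle is the first-column argument: verifying the position monotonicity $j_r<\cdots<j_1$ along a bumping chain and the force-a-new-row statement for decreasing subsequences both require the classical left-monotone property of RSK bumping chains — that the columns in which successive chain values are placed are weakly decreasing — combined with the fact that inserting a value strictly smaller than a previously inserted $y$ pushes the current location of $y$ strictly deeper. Care is also needed because the intermediate $x_s$ along a chain are not fresh terms from the current step but earlier insertions, so the invariant must recall their original positions in $\sigma$. I expect the bulk of the proof to consist in stating these bumping-chain lemmas precisely and verifying by induction on $i$ that the witnesses propagate without collapsing.
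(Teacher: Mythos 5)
The paper does not actually prove this statement: it is quoted as a known result with a citation to Schensted's 1961 paper, so there is no internal proof to compare against. Judged on its own merits, your first-row argument is the standard one and is sound: every entry of the first row is itself a term of the prefix inserted so far, the occupant of column $j-1$ is always smaller than a value landing in column $j$, and entries of a fixed first-row cell only decrease over time, which is exactly what the two inductions need.

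The first-column half, however, has a genuine gap, and it sits precisely at the point you flagged as the main obstacle. Your key claim is that when the bumping chain of $\sigma_{i+1}$ produces values $x_1<x_2<\cdots<x_r$, their original positions in $\sigma$ satisfy $j_r<j_{r-1}<\cdots<j_1$, so that the chain yields a decreasing subsequence of length $r$. This is false. Take $\sigma=(2,4,3,1)$. Inserting $2,4,3$ gives the tableau $((2,3),(4))$, and inserting $\sigma_4=1$ produces the chain $x_1=1$, $x_2=2$, $x_3=4$, creating a third row. But $2=\sigma_1$ and $4=\sigma_2$, so $j_2=1<j_3=2$: the values $4,2,1$ do \emph{not} occur in decreasing position order, and $(2,4,1)$ is not a decreasing subsequence. (A length-$3$ decreasing subsequence does exist, namely $(4,3,1)$, but it uses the value $3$, which never appears on this bumping chain.) The auxiliary fact you invoke --- that inserting a value smaller than a previously inserted $y$ pushes $y$ strictly deeper --- fails in the same example: inserting $1$ leaves $3$ untouched in row $1$. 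So no amount of careful bookkeeping of original positions will rescue this witness; the chain simply is not, in general, a decreasing subsequence read backwards. The standard repairs are either Schensted's own route, proving via column insertion that $P$ applied to the reversal of $\sigma$ has the transposed shape (so the column statement reduces to the row statement for the reversed word), or Viennot's geometric construction, or the Greene-invariant machinery; any of these would replace your first-column paragraph, while your first-row paragraph can stand as written.
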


Given the concrete connection between the LIS/LDS and the RSK-shape of a sequence, it is natural to ask which class of permutations can generate a given class of shapes. From Schensted’s Theorem, it immediately follows that the trivial shapes $(n)$ and $(1, 1, \dots, 1)$ can only be generated by the identity permutation and the reverse permutation, respectively. This motivates the definition of {\em RSK-complete} classes of permutations. Here $S_n$ represents the $n^\text{th}$ symmetric group, i.e.\ all permutations of $\{1, 2, \dots, n\}$.

\begin{definition}
A set of permutations $S \subseteq S_n$ is said to be \emph{RSK-complete} if any partition of $n$, other than the trivial partitions $(n)$ and $(1, 1, \dots, 1)$, is $\Gamma(P(\sigma))$ for some $\sigma\in S$.
\end{definition}

\begin{definition}
A cycle decomposition $\sigma = (n_1, n_2, \ldots, n_k)$, where $n_1 \ge n_2 \ge \dots \ge n_k$ and $\sum_{i=1}^k n_i = n$, is said to be \emph{RSK-complete} if the set of all permutations with cycle decomposition $\sigma$ is RSK-complete.
\end{definition}

\begin{definition}

A permutation $\sigma$ is said to be a \emph{cycle} or a \emph{cyclic permutation} if it has a cycle decomposition of $(|\sigma|)$.
\end{definition}

\begin{definition}

A sequence $\sigma$ of size $n$ is said to be a \emph{path} if there exists some permutation $\sigma'$ of size $n$ such that for all $1 \leq i < n$, $\sigma_{\sigma'_i} = \sigma'_{i+1}$, but $\sigma$ itself is not a permutation.
\end{definition}

Informally, a sequence is a path if its corresponding directed graph forms a path, with exactly one of its elements being greater than $n$.

\begin{definition}

The subarray $[\ell, r]$ of a sequence $\sigma$ is a \emph{shifted path} if there is some permutation $\sigma'$ of $\{\ell, \ldots, r\}$ such that for all $\ell \leq i < r$, $\sigma_{\sigma'_i} = \sigma'_{i+1}$.

\end{definition}

Informally, a shifted path is a part of a permutation that is a path when considered in the context of the permutation's graph.

We will now introduce the idea of a displacement path.

\begin{definition}
The \emph{displacement path} of an element $x$ that is being inserted into a SYT $\mathfrak{T}$ is the sequence of positions modified when that element is inserted. Since the row indices are consecutive, we can simply write the sequence of column indices. We will use $\Delta_\mathfrak{T}(x)$ to denote this sequence, and we drop the subscript $\mathfrak{T}$ when it is clear from the context. Further, we will let $\Lambda_\mathfrak{T}(x)$ be the sequence of displaced elements, so $(\Lambda_\mathfrak{T}(x))_i = \mathfrak{T}_{i, \Delta(x)_i}$, for $1 \leq i < |\Delta(x)|$. Again, we will often drop the subscript.
\end{definition}

In the example above, the displacement path of element 6 is $(3)$, the displacement path of element 3 is $(2, 2, 1)$, and $\Lambda(3) = (4, 5)$. Note that $\Lambda(3)$ is increasing. Displacement paths will prove useful in analyzing the properties of SYTs as they are being constructed.

We will next define tail-monotone permutations, where the largest elements of the permutation are in decreasing order towards the end, with one possible exception. These sequences will be useful for incremental construction of SYTs with a desired shape.

\begin{definition}
A permutation $\sigma$ of length $m = n+k$ is said to be $(n,k)$-{\em tail-monotone} if the following are true:
\begin{itemize}
    \item The subsequence $\sigma[n+1, n+k]$ is a descending sequence.
    \item $\sigma_{n+1} = m$.
    \item There is at most one $i \le n$ such that $\sigma_i > n$.
\end{itemize}
\end{definition}

\begin{theorem}
\label{Descending Sequence Concatenation}
Let $\sigma$ be a $(n,k)$-tail-monotone permutation. Then \[\Gamma(P(\sigma)) = \Gamma(P(\sigma[1, n])) + \Gamma(P(\sigma[n+1, n+k])) = \Gamma(P(\sigma[1, n])) + (1^k),\]
where $(1^k)$ represents a sequence of $k$ $1$'s.
\end{theorem}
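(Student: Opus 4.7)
The plan is to induct on $k$, proving that each of the $k$ tail insertions adds exactly one cell and that the $j$-th tail insertion places its new cell in row $j$. Writing $\mathfrak{T}^{(j)} := P(\sigma[1, n+j])$, so $\mathfrak{T}^{(0)} = P(\sigma[1, n])$ and $\mathfrak{T}^{(k)} = P(\sigma)$, iterating this claim for $j = 1, \ldots, k$ immediately yields $\Gamma(P(\sigma)) = \Gamma(P(\sigma[1, n])) + (1^k)$, which matches both sides of the identity once one observes that $\Gamma(P(\sigma[n+1, n+k])) = (1^k)$ because $\sigma[n+1, n+k]$ is strictly decreasing.

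The base case $j = 1$ is immediate: $\sigma_{n+1} = m$ is the maximum element of $\sigma$, so it exceeds every entry of $\mathfrak{T}^{(0)}$ and is simply appended to row $1$.

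For the inductive step, the lower bound on the row of the new cell comes from the standard row-insertion lemma: when two elements $x > y$ are inserted consecutively into a tableau, the new cell produced by $y$ lies strictly below the one produced by $x$. Since the tail is strictly decreasing and the first tail insertion added a cell in row $1$, the new cells produced by successive tail insertions lie in strictly increasing rows $r_1 < r_2 < \cdots$, and in particular $r_j \ge j$.

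The main obstacle is the matching upper bound $r_j \le j$, equivalently showing that the $j$-th displacement chain terminates exactly at row $j$. This is where the hypothesis that at most one $\sigma_i$ (with $i \le n$) exceeds $n$ becomes essential. By the inductive hypothesis, rows $j, j+1, \ldots$ of $\mathfrak{T}^{(j-1)}$ coincide with those of $\mathfrak{T}^{(0)}$, so the maximum entry of row $j$ is at most $n$ unless the unique large value $v$ of $\sigma[1,n]$ (when it exists) happens to lie in that row. To close the argument I would strengthen the induction by tracking the position of $v$ and its migration down the tableau during successive tail insertions, and then verify that the value cascading into row $j$ during the $j$-th insertion strictly exceeds the last entry of row $j$ of $\mathfrak{T}^{(j-1)}$, forcing the chain to terminate by appending there. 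The small example $\sigma = (1, 3, 6, 2, 7, 5, 4)$, in which $v = 6$ migrates from position $(1,3)$ down to $(2,2)$ and then to $(3,1)$ as the tail $7, 5, 4$ is successively inserted, illustrates exactly this mechanism, and the general row-by-row domination check is the technical heart of the proof.
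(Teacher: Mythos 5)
Your strategy is the same as the paper's: induct over the $k$ tail insertions and show that the $j$-th one creates exactly one new cell, in row $j$. Your base case and your lower bound $r_1<r_2<\cdots$ (via the standard row bumping lemma for inserting a larger element before a smaller one) are correct, and they correspond to Lemma~\ref{Displacement Path Left Shift} together with the first half of Lemma~\ref{Displacement Path Expansion}. The genuine gap is the upper bound $r_j\le j$: you name it the technical heart of the argument and then only describe a plan (``I would strengthen the induction by tracking the position of $v$\dots and then verify that the value cascading into row $j$ strictly exceeds the last entry of row $j$''). That verification is precisely the statement that needs proof, so nothing is actually established. The paper closes this step by observing that, inductively, the terminal cells of rows $1,\dots,j-1$ of $P(\sigma[1,n+j-1])$ are exactly $\sigma_{n+j-1},\sigma_{n+j-2},\dots,\sigma_{n+1}$ reading downward, that the displaced elements $\Lambda(\sigma_{n+j})$ form an increasing sequence all exceeding $\sigma_{n+j}$, and that an overshoot past row $j$ would force the unique large head element to sit at the end of row $j$ --- which could only have happened if $\sigma_{n+1}=n+k$ had bumped it there, impossible since $n+k$ is maximal.

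Be aware that the step you deferred is not a routine domination check: it genuinely requires more than the three bulleted conditions in the definition of tail-monotone. For instance, $\sigma=(2,4,3,5,1)$ is $(3,2)$-tail-monotone (the tail $(5,1)$ is descending, $\sigma_4=5$ is maximal, and only $\sigma_2=4$ exceeds $3$), yet $P(\sigma)$ has shape $(3,1,1)$ while $\Gamma(P((2,4,3)))+(1,1)=(3,2)$: the final insertion of $1$ cascades through three rows because \emph{every} entry of the current tableau exceeds it. What the argument actually needs --- and what holds in all of the paper's applications, where the tail's minimum is at least $n$ --- is that each tail element is exceeded by at most one entry outside the previously inserted tail, so that the displacement chain has at most one ``extra'' element available to bump. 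Until you isolate and use that fact, the ``row-by-row domination check'' you describe cannot be completed.
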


That is, the shape of $P(\sigma)$ is the shape of $P(\sigma[1,n])$, with an extra cell in each of the first $k$ rows. For example, if $\sigma = (4, 1, 2, 7, 3, 8, 6, 5)$, which is $(5,3)$-tail-monotone, then $P\lp\sigma[1, 5]\rp$ is
\[\young(123,47),\]
and $P\lp\sigma\rp$ is
\[\young(1235,468,7).\]

Observe that the second tableau is not simply obtained by adding an element from the tail to each row. We will prove Theorem~\ref{Descending Sequence Concatenation} using displacement paths.

\begin{lemma}
\label{Displacement Path Left Shift}
Let $\sigma$ be a $(n,k)$-tail-monotone permutation.
For all $z,r$ such that $1 < z \leq k$ and $1 \leq r \leq |\Delta(\sigma_{n+z-1})|$, we must have $\Delta(\sigma_{n+z})_r \leq \Delta(\sigma_{n+z-1})_r$.
\end{lemma}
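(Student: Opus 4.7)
The plan is to induct on the row index $r$, relying on the fact that $\sigma[n+1, n+k]$ is descending, so $\sigma_{n+z} < \sigma_{n+z-1}$; the full tail-monotone hypothesis enters only through this single inequality. Write $c_r = \Delta(\sigma_{n+z-1})_r$ and $c'_r = \Delta(\sigma_{n+z})_r$, let $T$ be the tableau just before inserting $\sigma_{n+z-1}$ and $T'$ the tableau immediately after, and let $y_r$ (resp.\ $y'_r$) denote the element being inserted into row $r$ during the first (resp.\ second) insertion, so that $y_1 = \sigma_{n+z-1}$ and $y'_1 = \sigma_{n+z}$.

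The base case $r = 1$ is immediate: position $(1, c_1)$ of $T'$ holds $\sigma_{n+z-1}$, which is strictly greater than $y'_1 = \sigma_{n+z}$, so by strict row monotonicity the leftmost entry in row $1$ of $T'$ exceeding $y'_1$ must sit at a column $\le c_1$, giving $c'_1 \le c_1$.

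For the inductive step, assuming $c'_{r-1} \le c_{r-1}$, the key claim is $y'_r < y_r$; once this is in hand, the same argument as the base case applied to row $r$ of $T'$ (where $y_r$ sits at column $c_r$) yields $c'_r \le c_r$. The claim $y'_r < y_r$ splits on whether $c'_{r-1} < c_{r-1}$ or $c'_{r-1} = c_{r-1}$. In the strict case, row $r-1$ of $T'$ coincides with row $r-1$ of $T$ at column $c'_{r-1}$, and strict monotonicity of $T$'s rows gives $y'_r = T_{r-1, c'_{r-1}} < T_{r-1, c_{r-1}} = y_r$. In the equality case, $y'_r$ is read from the cell $(r-1, c_{r-1})$ of $T'$, which now holds the freshly-placed $y_{r-1}$; and by the definition of the first bump, $y_{r-1} < y_r$.

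The main obstacle in executing this plan is bookkeeping: tracking which cells of $T$ differ in $T'$, and verifying along the way that the second insertion really does continue bumping at each row $\le |\Delta(\sigma_{n+z-1})|$ (so that $c'_r$ is actually defined and comparable), rather than terminating earlier. The case analysis itself is the standard row-bumping comparison argument familiar from RSK, but the presentation must be tight to keep the two insertions and the two tableaux $T$, $T'$ from getting tangled.
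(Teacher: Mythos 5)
Your proposal is correct and follows essentially the same route as the paper's proof: induction on the row index $r$, with the base case using $\sigma_{n+z} < \sigma_{n+z-1}$ and the inductive step showing that the element bumped into row $r$ by the second insertion is smaller than the one bumped by the first. In fact your explicit case split on $c'_{r-1} < c_{r-1}$ versus $c'_{r-1} = c_{r-1}$, together with the observation that $y_r$ still sits in row $r$ of $T'$ at column $c_r$ (so the second insertion cannot terminate early), supplies the bookkeeping that the paper compresses into the single sentence ``a smaller element will displace a smaller element,'' which silently ignores that the two insertions act on different tableaux.
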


\begin{proof}
Note that this is equivalent to saying that $\Lambda(\sigma_{n+z})_r \leq \Lambda(\sigma_{n+z-1})_r$. We will proceed via induction on $r$. For our base case, consider $r = 1$. Since $\sigma_{n+z} < \sigma_{n+z-1}$, $\sigma_{n+z}$ will end up displacing an element with a lower index than $\sigma_{n+z-1}$ will in the first row. Thus, $\Lambda(\sigma_{n+z})_1 \leq \Lambda(\sigma_{n+z-1})_1$.

For the inductive step, $\Lambda(\sigma_{n+z})_r \leq \Lambda(\sigma_{n+z-1})_r$ is equivalent to saying the element displaced to row $r+1$ by $\sigma_{n+z}$ is less than or equal to the element displaced by $\sigma_{n+z-1}$. A smaller element will displace a smaller element, so $\Lambda(\sigma_{n+z})_{r+1} \leq \Lambda(\sigma_{n+z-1})_{r+1}$ as well.
\end{proof}

\begin{lemma}
\label{Displacement Path Expansion}
Let $\sigma$ be a $(n,k)$-tail-monotone permutation.
For all $z$ such that $1 < z \leq k$, we must have $|\Delta(\sigma_{n+z})| = |\Delta(\sigma_{n+z-1})|+1$.
\end{lemma}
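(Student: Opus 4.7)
My plan is to establish both $|\Delta(\sigma_{n+z})| \geq |\Delta(\sigma_{n+z-1})|+1$ and $|\Delta(\sigma_{n+z})| \leq |\Delta(\sigma_{n+z-1})|+1$. For brevity let $d_{z-1} := |\Delta(\sigma_{n+z-1})|$, let $T := P(\sigma[1, n+z-2])$, and let $T' := P(\sigma[1, n+z-1])$.

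For the lower bound, I would sharpen Lemma~\ref{Displacement Path Left Shift} to a strict inequality $\Lambda(\sigma_{n+z})_r < \Lambda(\sigma_{n+z-1})_r$ for $1 \leq r \leq d_{z-1} - 1$. The base case $r=1$ follows because $\sigma_{n+z} < \sigma_{n+z-1}$ strictly and row $1$ of $T'$ differs from row $1$ of $T$ only at position $\Delta(\sigma_{n+z-1})_1$, where the new value $\sigma_{n+z-1}$ replaces the strictly larger $\Lambda(\sigma_{n+z-1})_1$. A short case split—on whether $\sigma_{n+z}$ displaces at a strictly earlier position or exactly at $\Delta(\sigma_{n+z-1})_1$—yields a strictly smaller displaced value in each case. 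The inductive step from $r$ to $r+1$ repeats this analysis row by row, using strict row-monotonicity of standard tableaux. Applying the strict inequality at $r = d_{z-1}-1$, the element arriving at row $d_{z-1}$ during $\sigma_{n+z}$'s insertion is strictly less than the element that $\sigma_{n+z-1}$ placed at the end of (or alone in) row $d_{z-1}$ of $T'$, which is the maximum entry of that row. Hence $\sigma_{n+z}$ must displace in row $d_{z-1}$, extending its chain at least to row $d_{z-1}+1$.

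For the upper bound, I would induct on $z$. By the inductive form of Theorem~\ref{Descending Sequence Concatenation} applied to $\sigma[1, n+z-1]$, the shape of $T'$ equals $\Gamma(P(\sigma[1, n])) + (1^{z-1})$; in particular, row $d_{z-1}+1$ of $T'$ coincides with row $d_{z-1}+1$ of $T_0 := P(\sigma[1, n])$, or is empty. If it is empty, the chain terminates immediately by starting a new row. Otherwise, I would show that the element arriving at row $d_{z-1}+1$ during $\sigma_{n+z}$'s insertion is at least the maximum of that row in $T_0$, invoking column-strictness of $T_0$ together with the tail-monotone restrictions (the descending tail, $\sigma_{n+1} = m$, and the bound on large head entries) to control the value of this arriving element. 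It is then placed at the end of row $d_{z-1}+1$ without further displacement, so $|\Delta(\sigma_{n+z})| = d_{z-1}+1$.

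The main obstacle will be the upper bound. The lower bound is essentially a strict refinement of Lemma~\ref{Displacement Path Left Shift} and propagates directly along the chain. The upper bound, by contrast, couples the local behavior of the insertion chain to the global structure of $T_0$, and requires the full strength of the tail-monotone hypothesis to rule out the chain overshooting into row $d_{z-1}+2$ or deeper.
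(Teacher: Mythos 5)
Your lower bound is sound and is essentially the argument the paper gives: Lemma~\ref{Displacement Path Left Shift} already places $\sigma_{n+z}$'s chain at a column $\leq \Delta(\sigma_{n+z-1})_r$ in row $r = |\Delta(\sigma_{n+z-1})|$, and that cell is occupied (it is where $\sigma_{n+z-1}$'s chain terminated), so the chain must displace there and continue. Your strict refinement of the left-shift lemma is correct but not needed for this step.

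The upper bound is where your proposal has a genuine gap. The intermediate claim you say you would prove --- that the element arriving at row $d_{z-1}+1$ is at least the maximum entry of that row of $T_0$ --- is literally equivalent to the conclusion that the chain terminates there, so nothing has been reduced; and the tool you name, column-strictness of $T_0$, points the wrong way. If the element bumped out of row $d_{z-1}$ is a head entry lying above a cell of row $d_{z-1}+1$, column-strictness says it is \emph{smaller} than the entry below it, so it would bump again and the chain would continue; column-strictness can only ever prolong the chain, never stop it. The idea you are missing, and the one the paper actually uses, is a counting argument: every entry of $\Lambda(\sigma_{n+z})$ strictly exceeds $\sigma_{n+z}$, and the tail-monotone hypothesis enters precisely by limiting the supply of such entries in $P(\sigma[1,n+z-1])$ to the $z-1$ previously inserted tail entries together with at most one head entry $\sigma_p$. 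Hence at most $z$ elements can be displaced, so $|\Delta(\sigma_{n+z})| \leq d_{z-1}+2$; and the boundary case dies because rows $\geq d_{z-1}+1$ are untouched by the earlier tail insertions (by the inductive hypothesis $|\Delta(\sigma_{n+j})|=j$) and therefore contain only head entries, so a chain of length $d_{z-1}+2$ would force the unique head entry $\sigma_p$ to be the element displaced from row $d_{z-1}+1$, i.e.\ the \emph{largest} displaced element --- larger than $\sigma_{n+1}=n+k$, which is impossible. Without some version of this count, your outline does not rule out the chain overshooting, which is exactly the point you flag as the main obstacle but do not resolve.
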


\begin{proof}

We will prove this in two parts, first proving that $|\Delta(\sigma_{n+z})| \geq |\Delta(\sigma_{n+z-1})|+1$ and then that $|\Delta(\sigma_{n+z})| \leq |\Delta(\sigma_{n+z-1})|+1$.

For the first inequality, let $r = |\Delta(\sigma_{n+z-1})|$. Then, by Lemma \ref{Displacement Path Left Shift}, $\Delta(\sigma_{n+z})_r \leq \Delta(\sigma_{n+z-1})_r$. Therefore, the insertion of $\sigma_{n+z}$ will not terminate at row $r$, and $|\Delta(\sigma_{n+z})| > r = |\Delta(\sigma_{n+z-1})|$.

Let us prove the second inequality by contradiction. Suppose that $|\Delta(\sigma_{n+z})| > |\Delta(\sigma_{n+z-1})|+1$. Then, since there is only one $1 \leq p \leq n$ for which $\sigma_p > \sigma_{n+z}$, it must be that $|\Delta(\sigma_{n+z})| = |\Delta(\sigma_{n+z-1})|+2$. If $\ell = |\Delta(\sigma_{n+z-1})|$, this only happens when the last element of $\mathfrak{T}_{\ell+1}$  is $\sigma_p$. However, note that $(\mathfrak{T}_{1, |\mathfrak{T}_1|}, \mathfrak{T}_{2, |\mathfrak{T}_2|}, \ldots, \mathfrak{T}_{\ell,|\mathfrak{T}_\ell|}) = (\sigma_{n+z-1}, \sigma_{n+z-2}, \ldots, \sigma_{n+1})$. Thus, $\sigma_{n+1} = n+k$ would have to displace $\sigma_p$, which is impossible. Thus, we have a contradiction.
\end{proof}

We can now prove Theorem \ref{Descending Sequence Concatenation}.

\begin{proof}[Proof of Theorem \ref{Descending Sequence Concatenation}]
By induction using Lemma \ref{Displacement Path Expansion}, we have $|\Delta(\sigma_{n+z})| = z$. Thus, $|\Delta(\sigma_{n+k})| = k$, meaning that our final shape is $\Gamma(P(\sigma[1, n]))+(1^k)$, as desired.
\end{proof}

\section{Cyclic Permutations}
\label{sec:cyclic}
In this section, we will prove that the cycle decomposition $(n)$ is RSK-complete:

\begin{theorem}
\label{Cyclic Theorem}
All RSK shapes apart from $(1, 1, \ldots, 1)$ and $(n)$ can be achieved using only cyclic permutations, where a permutation is cyclic if its cycle decomposition consists of exactly one cycle.
\end{theorem}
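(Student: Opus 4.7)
The plan is to proceed by strong induction on $n$, building cyclic permutations via Theorem~\ref{Descending Sequence Concatenation} whenever possible and supplying explicit constructions for the remaining ``irreducible'' shapes. Small base cases $n \le 4$ are dispatched by direct enumeration of cyclic permutations for each non-trivial shape.

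For the inductive step with target $\lambda \vdash n$ non-trivial, I would try to write $\lambda = \mu + (1^k)$ with $k \ge 2$ and $\mu$ a non-trivial partition of $n-k$, and then apply the inductive hypothesis to obtain a cyclic $\tau$ on $\{1,\ldots,n-k\}$ with $\Gamma(P(\tau)) = \mu$. Letting $p$ be the unique position with $\tau_p = n-k$, I would define $\sigma$ by keeping $\sigma_i = \tau_i$ for $i \neq p$, setting $\sigma_p = x$ for a chosen $x \in \{n-k+1,\ldots,n-1\}$, and taking $\sigma[n-k+1,n]$ to be the descending arrangement of $\bigl(\{n-k+1,\ldots,n\}\setminus\{x\}\bigr)\cup\{n-k\}$. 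Because $x$ and $n-k$ both play the role of the maximum of their sequences, $\sigma[1,n-k]$ is order-isomorphic to $\tau$, so $\Gamma(P(\sigma[1,n-k])) = \mu$; combined with $(n-k,k)$-tail-monotonicity, Theorem~\ref{Descending Sequence Concatenation} then yields $\Gamma(P(\sigma)) = \mu + (1^k) = \lambda$.

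The main obstacle will be preserving cyclicity. Tracing $\sigma$ from $1$, one follows $\tau$'s cycle up to $p$, jumps to $x$, must then thread through every element of $\{n-k+1,\ldots,n\}$ via the tail map $n-k+s \mapsto t_s$, and finally exits at $n \mapsto n-k$ to resume $\tau$'s cycle. I claim the correct choice is $x = n-k+\lceil k/2\rceil$: with this $x$ the tail map alternates between the upper and lower halves of $\{n-k+1,\ldots,n\}$ and the orbit of $x$ under the tail map is a Hamiltonian path ending at $n$. Verifying this Hamiltonian-path claim by direct computation of the tail map is the most delicate step of the proof; the restriction $k \ge 2$ is essential, since $k=1$ forces $\sigma_n = n$ and hence a fixed point.

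Finally, a short case analysis shows that the shapes admitting no such reduction with $k\ge 2$ to non-trivial $\mu$ are exactly the hooks $(\lambda_1,1^{\ell-1})$ with $\ell\ge 2$ and the shapes $(2^a,1^b)$ with $a\ge 2$; these I would supply as additional base cases by explicit construction. For example, the $n$-cycle $(1\,2\,\cdots\,n)$ written as the permutation $(2,3,\ldots,n,1)$ realizes the hook $(n-1,1)$; for the hook $(2,1^{n-2})$, the permutation obtained by listing $n,n-1,\ldots,1$ in decreasing order, deleting the entry $\lfloor(n-1)/2\rfloor+1$, and appending it at the end is a single $n$-cycle of shape $(2,1^{n-2})$. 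Analogous explicit formulas for the remaining hooks and for $(2^a,1^b)$ with $a \ge 2$ complete the base cases, closing the induction.
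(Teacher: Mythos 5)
Your proposal is essentially the paper's own proof: the inductive step that replaces the maximum of $\tau$ with $x = n-k+\lceil k/2\rceil$ and appends a descending tail is exactly the paper's $A(\sigma,k)$ construction (Lemma~\ref{K-Add Construction validity}, resting on Theorem~\ref{Descending Sequence Concatenation} for the shape and on Lemma~\ref{Descending Sequence Path} for your Hamiltonian-path claim), and your irreducible base cases --- hooks and $(2^a,1^b)$ --- are precisely the shapes the paper realizes explicitly as $B(m,n)$ and $B'(m,n)$. The one substantive omission is that you assert rather than verify the ``analogous explicit formulas'' for general hooks with $\lambda_1\ge 3$ and for two-column shapes with $a\ge 2$; this is where Lemmas~\ref{Base Case Validity} and~\ref{Base Case 2 Validity} do real work, the two-column case in particular requiring two interleaved $L$-type paths to be stitched into a single $n$-cycle.
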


Our proof will be constructive. To construct a cyclic permutation with a given shape, we will start with one of two base case constructions: the first construction gives us all shapes with two columns, and the second gives all shapes where all boxes are either in the first row or the first column. Then we will apply another construction to successively add columns by concatenating descending sequences until we have the desired shape.

First, let us begin with the following sequence construction, which will be the main building block for the two-column base case as well as our construction for adding columns.

\begin{definition}
For an integer $n \geq 1$, let $L(n)$ be defined as follows:
\[L(n) = \lp n, n-1, \ldots, \lc\frac{n}{2}\rc+1, \lc\frac{n}{2}\rc-1, \ldots, 2, 1\rp.\]
Note that the length of $L(n)$ is $n-1$ since it is missing the element $\lc n/2 \rc$.
\end{definition}

\begin{lemma}
\label{Descending Sequence Path}
Consider the directed graph $G = (V, E)$, where $V = \{ 1, 2, \ldots, n-1\}$, and $E = \{(i, L(n)_i):1<i<n\}$. Then $G$ forms a path.
\end{lemma}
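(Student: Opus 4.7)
The plan is to prove $G$ is a simple directed path in two phases: first an in-/out-degree count that reduces the problem to showing the absence of cycles, and then an explicit trace of the walk from the unique source.

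For the degree analysis, every vertex $i \in \{2, \ldots, n-1\}$ has out-degree exactly $1$ from the edge $(i, L(n)_i)$, and vertex $1$ has out-degree $0$. For in-degrees, observe that $L(n)$ enumerates the values $\{1, 2, \ldots, n\} \setminus \{\lc n/2 \rc\}$ with $L(n)_1 = n$. Restricting to indices $i \in \{2, \ldots, n-1\}$, the set of edge targets is therefore exactly $\{1, \ldots, n-1\} \setminus \{\lc n/2 \rc\}$, each appearing once. Hence every vertex other than $\lc n/2 \rc$ has in-degree $1$, and $\lc n/2 \rc$ has in-degree $0$. Since in- and out-degrees are each at most $1$ everywhere, $G$ decomposes as a disjoint union of simple directed paths and simple directed cycles; the degree counts force exactly one maximal path, which must run from $\lc n/2 \rc$ to $1$.

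It remains to rule out cycles, which I would do by tracing the walk from $m := \lc n/2 \rc$ and showing it visits all $n - 1$ vertices. Set $v_0 = m$ and $v_{k+1} = L(n)_{v_k}$. The definition of $L(n)$ splits into an ``ascending'' branch $L(n)_i = n + 1 - i$ when $i \le n - m$ and a ``descending'' branch $L(n)_i = n - i$ when $i > n - m$. I would then split on parity: for even $n = 2m$, I claim by induction that $v_{2j} = m - j$ for $0 \le j \le m - 1$ and $v_{2j+1} = m + 1 + j$ for $0 \le j \le m - 2$; for odd $n = 2m - 1$, the analogous claim is $v_0 = m$ together with $v_{2j-1} = m - j$ and $v_{2j} = m + j$ for $j \ge 1$ (up to the appropriate bound). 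Each inductive step simply checks which branch of the piecewise definition applies to the current vertex, alternating ascending and descending. In both parities the walk reaches $1$ after exactly $n - 2$ steps, so the path is Hamiltonian and no cycles can exist.

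The only real work is the parity split together with the routine induction; the main subtlety is bookkeeping which branch of $L(n)$ is active at each step, dictated by whether the current vertex lies in $\{2, \ldots, n - m\}$ or in $\{n - m + 1, \ldots, n - 1\}$. There is no deeper obstacle, since the explicit shape of the path is forced once the degree argument pins down its endpoints.
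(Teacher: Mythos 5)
Your proposal is correct and follows essentially the same route as the paper: both arguments combine the in-/out-degree count (unique source $\lceil n/2\rceil$, unique sink $1$) with an explicit computation of the alternating walk between the small vertices $m-j$ and the large vertices $m+j$ (the paper packages this as $N(N(\ell_i))=\ell_{i+1}$, $N(N(r_i))=r_{i+1}$ to conclude acyclicity, while you trace the walk to show it is Hamiltonian, which is the same calculation). The parity bookkeeping in your two cases checks out, so no gap.
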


\begin{proof}
Consider $G$ in the form of a bipartite graph, with nodes $\lp1, 2, \ldots, \lf\frac{n}{2}\rf\rp$ on the left, and nodes $\lp\lf\frac{n}{2}\rf+1, \lf\frac{n}{2}\rf+2, \ldots, n-1\rp$ on the right. Let $\ell_i = \lf\frac{n}{2}\rf+1-i$ for $1 \leq i \leq \lf\frac{n}{2}\rf$, and let $r_i = \lf\frac{n}{2}\rf+i$ for $1 \leq i \leq \lc\frac{n}{2}\rc-1$. Furthermore, let $p$ be $1$ for odd $n$ and $0$ for even $n$, and let $N(i) = L(n)_i$.

For all $j$ where $1 \leq j \leq \lf\frac{n}{2}\rf$, we have that $N(j) = n+1-j$. Thus, $N(\ell_i) = n+1-\ell_i = n+i-\lf\frac{n}{2}\rf = r_{i+p}$. Similarly, for some $j$ where $\lf\frac{n}{2}\rf < j < n$, $N(j) = n-j$. Thus, $N(r_i) = n-r_i = n-i-\lf\frac{n}{2}\rf = \ell_{i+1-p}$.
Hence for any $i$ where $1 \leq i < \lf\frac{n}{2}\rf$ we have $N(N(\ell_i)) = \ell_{i+1}$, and similarly for any $i$ where $1 \leq i \leq \lc\frac{n}{2}\rc - 2$ we have $N(N(r_i)) = r_{i+1}$. Thus, $G$ is acyclic. As there is exactly one node of indegree $0$ (namely, node $\lc \frac{n}{2} \rc$), and one node of outdegree $0$ (namely, node $1$), and all other nodes have in and outdegree $1$, our graph forms a path.
\end{proof}

For example, $L(7) = (7, 6, 5, 3, 2, 1)$, and the corresponding directed graph as defined in Lemma \ref{Descending Sequence Path} is shown in Figure \ref{lambda7 example}.

\begin{figure}
    \centering
\begin{tikzpicture}
\filldraw[fill=gray!15!white, draw=gray] (4,4) circle (0.3cm) node {3};
\filldraw[fill=gray!15!white, draw=gray] (8,4) circle (0.3cm) node {4};
\draw[-latex] (7.7, 4) -- (4.3, 4);
\draw[-latex] (4.26, 3.87) -- (7.74,2.13);
\filldraw[fill=gray!15!white, draw=gray] (4,2) circle (0.3cm) node {2};
\filldraw[fill=gray!15!white, draw=gray] (8,2) circle (0.3cm) node {5};
\draw[-latex] (7.7, 2) -- (4.3, 2);
\draw[-latex] (4.26, 1.87) -- (7.74,0.13);
\filldraw[fill=gray!15!white, draw=gray] (4,0) circle (0.3cm) node {1};
\filldraw[fill=gray!15!white, draw=gray] (8,0) circle (0.3cm) node {6};
\draw[-latex] (7.7, 0) -- (4.3, 0);
\end{tikzpicture}
    \caption{The directed graph corresponding to $L(7)$.}
    \label{lambda7 example}
\end{figure}
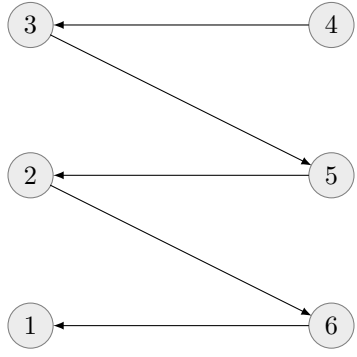

We also need to introduce a key operation on sequences.

\begin{definition}
Given a sequence $\sigma$, let $I(\sigma)$ be the sequence $\sigma'$ such that $|\sigma'| = |\sigma|$ and for all $1 \leq i \leq |\sigma|$, $\sigma'_i = \sigma_i+1$. We will further define $I^k(\sigma)$ to denote  $\sigma$ for $k=0$ and $I\lp I^{k-1}(\sigma)\rp$ for $k > 0$, where $k$ is an integer.
\end{definition}

We will now present the following construction. We will use $\oplus$ to denote the concatenation operator. For example, $(2, 4) \oplus (1, 3) = (2, 4, 1, 3)$.

\begin{definition}
For two integers $1 < m < n$, we will define $B(m, n)$ to be the following concatenation of sequences:

\[B(m, n) = (2, 3, \ldots, m-1) \oplus\lp\lf\frac{n+m}{2}\rf\rp \oplus I^{m-1}\lp L(n-m+1)\rp \oplus (1).\]
\end{definition}

\begin{lemma}
\label{B(m, n) is a permutation}
For any two integers $1 < m < n$, the sequence $B(m, n)$ is a permutation of length $n$.
\end{lemma}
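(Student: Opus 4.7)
The plan is to show two things: that the total length of $B(m,n)$ is $n$, and that the elements listed are exactly $\{1, 2, \ldots, n\}$ with no repetition.

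For the length, the four concatenated pieces have lengths $m-2$, $1$, $n-m$, and $1$ respectively (using the fact from the definition that $|L(k)| = k-1$, so $|I^{m-1}(L(n-m+1))| = n-m$). These sum to $n$, so $|B(m,n)| = n$.

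For the elements, I would identify what each piece contributes. The prefix $(2, 3, \ldots, m-1)$ contributes $\{2, 3, \ldots, m-1\}$, the final entry contributes $\{1\}$, and $I^{m-1}(L(n-m+1))$ contributes $\{m, m+1, \ldots, n\} \setminus \{m-1 + \lceil (n-m+1)/2\rceil\}$, since $L(k)$ uses exactly $\{1, \ldots, k\} \setminus \{\lceil k/2\rceil\}$ and applying $I^{m-1}$ shifts everything by $m-1$. Together with the middle entry $\lfloor (n+m)/2\rfloor$, the union covers $\{1, \ldots, n\}$ provided that $\lfloor (n+m)/2\rfloor$ is exactly the element missing from the shifted $L$-piece, namely $m-1+\lceil (n-m+1)/2\rceil$.

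The main step is therefore the identity
\[\lf \tfrac{n+m}{2}\rf \;=\; m - 1 + \lc \tfrac{n-m+1}{2}\rc,\]
which I would verify by splitting on the parity of $n-m$. When $n-m$ is even (so $n$ and $m$ have the same parity), both sides equal $(n+m)/2$. When $n-m$ is odd, both sides equal $(n+m-1)/2$. In particular, this also shows the middle entry lies in the range $\{m, \ldots, n\}$, confirming it does not collide with either the prefix $\{2, \ldots, m-1\}$ or the final entry $\{1\}$.

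Once this parity computation is in place, the three contributing sets $\{1\}$, $\{2, \ldots, m-1\}$, and $\{m, \ldots, n\}$ partition $\{1, \ldots, n\}$, and the middle entry fills in exactly the hole left by the $I^{m-1}(L(n-m+1))$ piece, so $B(m,n)$ has $n$ distinct entries drawn from $\{1, \ldots, n\}$, completing the verification. There is no real obstacle here; the only care required is in the floor/ceiling bookkeeping across the parity split, which is why I would organize the argument around the single identity above.
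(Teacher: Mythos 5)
Your proposal is correct and follows essentially the same route as the paper: decompose $B(m,n)$ into its constituent pieces, observe that $(1)$ and $(2,\ldots,m-1)$ give $\{1,\ldots,m-1\}$, and check that the middle entry $\lf\frac{n+m}{2}\rf$ is exactly the element missing from $I^{m-1}(L(n-m+1))$ so that together they cover $\{m,\ldots,n\}$. The paper handles the floor/ceiling identity via the rewriting $\lc\frac{n+m-1}{2}\rc = \lf\frac{n+m}{2}\rf$ rather than an explicit parity split, but this is the same bookkeeping.
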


\begin{proof}
We wish to show that every integer from $1$ to $n$ appears exactly once in $B(m, n)$. We can reorder $B(m, n)$ into four sequences for clarity. Namely, $(1)$, $(2, 3, \ldots, m-1)$, $I^{m-1}\lp L(n-m+1)\rp$, and $\lp\lf\frac{n+m}{2}\rf\rp$. Clearly, the first two sequences give us the elements $\{1, 2, \ldots, m-1\}$. By its definition, $L(n-m+1)$ contains the elements $\{1, 2, \ldots, \lc\frac{n-m-1}{2}\rc, \lc\frac{n-m+3}{2}\rc, \ldots, n-m, n-m+1\}$. Hence, $I^{m-1}\lp L(n-m+1)\rp$ contains the elements
\begin{align*}
    \left\{m, m+1, \ldots, \lc{\frac{n+m-3}{2}}\rc, \lc{\frac{n+m+1}{2}}\rc, \ldots, n-1, n\right\}
    &= \{m, m+1, \ldots, n\} \setminus \left\{\lc \frac{n+m-1}{2}\rc\right\} \\
    \\ &= \{m, m+1, \ldots, n\} \setminus \left\{\lf \frac{n+m}{2} \rf\right\}.
\end{align*}
Thus, our final two sets taken together yield each value in $\{m, m+1, \ldots, n\}$, showing that $B(m, n)$ is a permutation.
\end{proof}

\begin{lemma}
\label{Base Case Validity}
The permutation $B(m, n)$ is cyclic, and $\Gamma\lp P\lp B(m, n)\rp\rp = (m, 1, \dots, 1)$, with $n-m$ $1$'s.
\end{lemma}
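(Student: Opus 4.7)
My plan is to independently establish cyclicity and the hook shape. For cyclicity I would trace the orbit of $1$ under $B(m,n)$ viewed as a permutation. The initial block $(2, 3, \ldots, m-1)$ directly yields $1 \to 2 \to \cdots \to (m-1)$, after which position $m-1$ sends the orbit to $\lfloor(n+m)/2\rfloor$. A short parity check shows $\lceil(n-m+1)/2\rceil + (m-1) = \lfloor(n+m)/2\rfloor$, so $\lfloor(n+m)/2\rfloor$ is exactly the shift by $m-1$ of the source vertex of the path furnished by Lemma~\ref{Descending Sequence Path} applied to $L(n-m+1)$. The entries of $I^{m-1}(L(n-m+1))$ occupy positions $m, m+1, \ldots, n-1$, and the edges originating from positions $m+1, \ldots, n-1$ reproduce the $L(n-m+1)$-graph with both positions and values shifted by $m-1$; by the lemma they form a path through $\{m, \ldots, n-1\}$ terminating at $m$. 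The index-$1$ entry of $L(n-m+1)$, which is omitted from the lemma's edge set, supplies the additional edge $m \to B(m,n)_m = L(n-m+1)_1 + (m-1) = n$, and finally $B(m,n)_n = 1$. Concatenating, the orbit of $1$ is $1 \to 2 \to \cdots \to (m-1) \to \lfloor(n+m)/2\rfloor \to \cdots \to m \to n \to 1$, which visits all $n$ elements exactly once.

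For the shape I would invoke Schensted's theorem (Theorem~\ref{Schensted’s Theorem}) together with the observation that a partition of $n$ with first row of length $m$ and first column of length $n-m+1$ must be the hook $(m, 1^{n-m})$, since these two arms already account for all $n$ cells. It therefore suffices to show that $\mathrm{LIS}(B(m,n)) = m$ and $\mathrm{LDS}(B(m,n)) = n-m+1$. I would decompose $B(m,n)$ into the strictly increasing prefix $(2, 3, \ldots, m-1, \lfloor(n+m)/2\rfloor)$ of length $m-1$, the strictly decreasing middle $I^{m-1}(L(n-m+1))$ of length $n-m$ (strictly decreasing because $L(n-m+1)$ skips only its median value), and the singleton $(1)$. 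Appending the first middle entry $n$ to the entire prefix gives an increasing subsequence of length $m$; any longer one would require two entries from the decreasing middle. Dually, the entire middle block followed by $1$ is decreasing of length $n-m+1$; a short case analysis on whether an element of the increasing prefix participates in a candidate decreasing subsequence (noting that any prefix entry from $(2, 3, \ldots, m-1)$ is strictly smaller than every middle entry, and that choosing $\lfloor(n+m)/2\rfloor$ as the initial entry forces us to drop all middle entries greater than it) shows no decreasing subsequence exceeds $n-m+1$.

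The main obstacle is the bookkeeping in the cycle trace: Lemma~\ref{Descending Sequence Path} supplies only edges from interior indices of $L$, so the boundary edge $m \to n$ and the wrap-around edge $n \to 1$ must be handled separately, and the identity $\lceil(n-m+1)/2\rceil + (m-1) = \lfloor(n+m)/2\rfloor$ warrants separate checks for $n+m$ even and $n+m$ odd to confirm that the middle entry of $B(m,n)$ really is the starting vertex of the shifted $L$-path. Once these alignments are settled, the LIS/LDS calculations are essentially mechanical.
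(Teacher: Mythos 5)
Your proposal is correct and follows essentially the same route as the paper: both rely on Lemma~\ref{Descending Sequence Path} for the $L$-block and on Schensted's theorem applied to the unimodal structure of $B(m,n)$ (peak value $n$ at position $m$) to pin down the hook shape. The only difference is cosmetic, in the cyclicity step: the paper observes abstractly that two paths in a functional graph either close up as separate cycles or merge into one cycle, which sidesteps your explicit orbit trace and the alignment identity $\lc\frac{n-m+1}{2}\rc+(m-1)=\lf\frac{n+m}{2}\rf$; both arguments are valid.
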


\begin{proof}
First we prove cyclicity. Clearly, $B(m, n)[1, m-1]$ forms a path, and by Theorem \ref{Descending Sequence Path}, $B(m, n)[m, n]$ also forms a shifted path. There are only two ways to merge two paths into a graph where every node has indegree and outdegree $1$: either the paths independently form cycles, or they join into one larger cycle. In other words, either both $B(m, n)[1, m-1]$ and $B(m, n)[m, n]$ are cyclic, or $B(m, n)$ is cyclic, and since $B(m, n)[1, m-1]$ clearly isn't cyclic, $B(m, n)$ is cyclic, as desired.

To deduce the shape, we can use Schensted's Theorem. Since $B(m, n)$ is unimodal, i.e.\ it is increasing up to some point and decreasing after that point, and $B(m, n)_m = n$, the LIS (and hence the first row) has length $m$ and the LDS (and hence the first column) has length $n-m+1$. Since $B(m, n)$ is a sequence of length $n$, it must be that $\Gamma\lp\lp B(m, n)\rp\rp = (m, 1, 1, \ldots, 1)$, as desired.
\end{proof}

For example, when $(m, n) = (5, 7)$, we have $B(m, n) = (2, 3, 4, 6, 7, 5, 1)$. This permutation corresponds to the SYT
\[\young(13457,2,6),\]
which has the desired shape.

We will also require use of the following construction.

\begin{definition}
For integers $1 < m \leq \frac{n}{2}$, we will define $B'(m, n)$ to be the following concatenation of sequences:
\[B'(m, n) = \lp \lc\frac{n+m}{2}\rc\rp\oplus I\lp L\lp m-1\rp\rp\oplus(1)\oplus I^m\lp L\lp n-m\rp\rp\oplus\lp\lf\frac{m}{2}\rf+1\rp.\]
\end{definition}

\begin{lemma}
\label{B'(m, n) is a permutation}
For any two integers $1 < m \le \frac{n}{2}$, the sequence $B'(m, n)$ is a permutation of length $n$.
\end{lemma}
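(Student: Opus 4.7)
The plan is to follow the same strategy as in Lemma~\ref{B(m, n) is a permutation}: split $B'(m,n)$ into its five constituent pieces, compute the multiset of elements in each piece using the known structure of $L(\cdot)$ and $I^{\cdot}$, and verify that the disjoint union equals $\{1,2,\ldots,n\}$. Since $L(k)$ has length $k-1$ and $I^k$ preserves length, the total length of $B'(m,n)$ is $1+(m-2)+1+(n-m-1)+1 = n$, so it suffices to check that every element of $\{1,\ldots,n\}$ appears at least once.

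Recall that $L(k)$ contains exactly the elements $\{1,2,\ldots,k\}\setminus\{\lceil k/2\rceil\}$. First I would analyze the ``middle block'' $I(L(m-1))$: it is $\{1,\ldots,m-1\}\setminus\{\lceil (m-1)/2\rceil\}$ shifted up by $1$, i.e.\ $\{2,\ldots,m\}\setminus\{\lceil (m-1)/2\rceil+1\}$. The key computation is that $\lceil(m-1)/2\rceil+1 = \lfloor m/2\rfloor + 1$, which one checks by splitting on the parity of $m$: if $m$ is even then $\lceil(m-1)/2\rceil = m/2$, and if $m$ is odd then $\lceil(m-1)/2\rceil = (m-1)/2$, and in both cases the result equals $\lfloor m/2\rfloor + 1$. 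Hence the trailing singleton $(\lfloor m/2\rfloor + 1)$ in the definition of $B'(m,n)$ supplies exactly the element missing from $I(L(m-1))$, so these two pieces together contribute $\{2,3,\ldots,m\}$.

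Next I would perform the analogous analysis on $I^m(L(n-m))$: this set is $\{m+1,\ldots,n\}\setminus\{\lceil(n-m)/2\rceil+m\}$. Again splitting on the parity of $n-m$ (equivalently, $n+m$), one verifies that $\lceil(n-m)/2\rceil + m = \lceil(n+m)/2\rceil$. Thus the leading singleton $(\lceil (n+m)/2\rceil)$ is precisely the missing element, so together these two pieces contribute $\{m+1,m+2,\ldots,n\}$. Finally, the singleton $(1)$ contributes the element $1$, and combining all three ranges gives $\{1,2,\ldots,n\}$ with each element appearing exactly once.

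The only real obstacle is the parity bookkeeping in identifying the two ``missing'' elements, which has to be done cleanly for both even and odd cases of $m$ and of $n-m$; I would also need a quick sanity check that $\lfloor m/2\rfloor + 1 \in \{2,\ldots,m\}$ (which uses $m > 1$) and that $\lceil(n+m)/2\rceil \in \{m+1,\ldots,n\}$ (which uses $1 < m \leq n/2$), so that the two singletons really do fall into the intended ranges and no double-counting occurs. Everything else is routine verification.
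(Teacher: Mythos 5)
Your proposal is correct and follows essentially the same approach as the paper: decompose $B'(m,n)$ into its constituent blocks, use the fact that $L(k)$ contains $\{1,\ldots,k\}\setminus\{\lceil k/2\rceil\}$, and verify that the two singletons $\lp\lf\frac{m}{2}\rf+1\rp$ and $\lp\lc\frac{n+m}{2}\rc\rp$ supply exactly the elements missing from $I(L(m-1))$ and $I^m(L(n-m))$ respectively. Your explicit length count and range checks for the singletons are slightly more careful than the paper's write-up, but the argument is the same.
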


\begin{proof}
Once again, consider $B'(m, n)$ in terms of sets of elements it contains. We can write it as containing the elements in $(1)$, $I\lp L(m-1)\rp$, $\lp\lf\frac{m}{2}\rf+1\rp$, $I^m\lp L(n-m)\rp$, and $\lp\lc\frac{n+m}{2}\rc\rp$. Writing these as sets, we obtain the following set of elements for the union of the first three:

\[ \{1\}\cup \left\{2, 3, \ldots, \lf\frac{m}{2}\rf, \lf \frac{m}{2}\rf+2, \ldots, m-1, m\right\}\cup \left\{\lf\frac{m}{2}\rf+1\right\}
    = \{1, 2, \ldots, m\}. \]

Following a similar process for our last two sequences, we obtain the set
\[ \left\{m+1, m+2, \ldots, \lc \frac{n+m}{2}-1\rc, \lc \frac{n+m}{2}+1\rc, \ldots, n-1, n\right\} \cup \left\{\lc\frac{n+m}{2}\rc\right\}
    = \{m+1, m+2, \ldots, n\}. \]
Thus, overall, we have the set $\{1, 2, \ldots, n\}$, as desired.
\end{proof}

\begin{lemma}
\label{Base Case 2 Validity}
The permutation $B'(m, n)$ is cyclic, and $\Gamma\lp P\lp B'(n)\rp\rp$ consists of $m$ $2$'s followed by $n-2m$ $1$'s.
\end{lemma}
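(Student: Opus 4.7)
The plan is to mirror the proof of Lemma~\ref{Base Case Validity}: first establish cyclicity via a shifted-path decomposition of the cycle graph, and then pin down the shape via Schensted's Theorem by computing LIS$\,= 2$ and LDS$\,= n - m$.

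For cyclicity, I will show that $B'(m,n)[1,m]$ is a shifted path on $\{1,\ldots,m\}$ and $B'(m,n)[m+1,n]$ is a shifted path on $\{m+1,\ldots,n\}$. Applying Lemma~\ref{Descending Sequence Path} to $L(m-1)$ gives a path on $\{1,\ldots,m-2\}$; shifting vertex labels up by $1$ and adjoining the additional edges coming from $\sigma_2 = m$ and $\sigma_m = 1$ extends this to a Hamiltonian path on $\{1,\ldots,m\}$ terminating at vertex $1$. An analogous argument, applying Lemma~\ref{Descending Sequence Path} to $L(n-m)$ shifted by $m$ together with $\sigma_{m+1} = n$ and $\sigma_n = \lf m/2 \rf + 1$ leaving the range, gives a shifted path on $\{m+1,\ldots,n\}$. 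Since $\sigma_1 = \lc (n+m)/2 \rc$ and $\sigma_n = \lf m/2 \rf + 1$ each cross between the two index sets, neither restricted subgraph is a cycle on its own, so the two shifted paths must combine into a single global cycle.

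For the shape, write $B'(m,n) = A \oplus B$ with $A = B'(m,n)[1,m]$ and $B = B'(m,n)[m+1,n]$. Both are strictly decreasing by direct inspection: $A$ reads $\lc (n+m)/2 \rc, m, m-1, \ldots, \lf m/2 \rf + 2, \lf m/2 \rf, \ldots, 2, 1$, and $B$ reads $n, n-1, \ldots, \lc (n+m)/2 \rc + 1, \lc (n+m)/2 \rc - 1, \ldots, m+1, \lf m/2 \rf + 1$. By Dilworth's theorem LIS$\,\leq 2$, and since $\sigma_m = 1 < n = \sigma_{m+1}$ we get LIS$\,= 2$. By Schensted's Theorem the shape must then be $(2^a, 1^b)$ with $a \geq 1$, $b \geq 0$, and $2a + b = n$.

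It remains to compute LDS. The lower bound LDS$\,\geq n - m$ is immediate since $B$ itself is decreasing of length $n - m$. For the upper bound, observe that any decreasing subsequence of $A \oplus B$ decomposes as $V_A \sqcup V_B$ with $\min V_A > \max V_B$ when both are non-empty, because $A$-positions precede $B$-positions. Letting $S_A$ and $S_B$ denote the value sets of $A$ and $B$, this gives $|V_A| + |V_B| \leq |S_A \cap [t,n]| + |S_B \cap [1,t-1]|$ for the threshold $t = \min V_A$. Tracking the right-hand side as $t$ increases from $1$ to $n+1$---it moves by $+1$ past each element of $S_B$ and by $-1$ past each element of $S_A$---and using $m \leq n/2$, the maximum value is $n - m$, attained at $t = n+1$. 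Combining LIS$\,= 2$, LDS$\,= n - m$, and $2a + b = n$ forces $a = m$ and $b = n - 2m$, giving the claimed shape. The main technical hurdle will be the LDS upper bound: the alternating $\pm 1$ pattern of the threshold walk is interrupted near the gap values $\lf m/2 \rf + 1 \in S_B$ and $\lc (n+m)/2 \rc \in S_A$, so one must verify that these interruptions do not push the partial sum above $n - m$.
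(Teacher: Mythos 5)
Your proposal is correct and follows essentially the same route as the paper: cyclicity via the two shifted paths on $\{1,\ldots,m\}$ and $\{m+1,\ldots,n\}$ forced to join into one cycle by the crossing entries $\sigma_1$ and $\sigma_n$, and the shape via Schensted's Theorem with LIS $=2$ and LDS $=n-m$. The only real difference is the LDS upper bound, where the paper sidesteps your threshold-walk bookkeeping by observing that the values in positions $2,\ldots,m$ all lie below the values in positions $m+1,\ldots,n-1$, so a decreasing subsequence can draw on at most one of those two runs, giving $\max(m,\,n-m)=n-m$ directly.
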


\begin{proof}
First we prove cyclicity. We can prove this in the same way as the proof of Lemma \ref{Base Case Validity}. Since, $B(m, n)[2, m-1] = I\lp L(m-1)\rp$ and $B(m, n)[m+1, n-1] = I^m\lp L(n-m)\rp$, by Theorem \ref{Descending Sequence Path}, these two subarrays form shifted paths. Observe that we can extend these shifted paths to $B(m, n)[1, m]$ and $B(m, n)[m+1, n]$ respectively. Since neither $B(m, n)[1, m]$ nor $B(m, n)[m+1, n]$ is cyclic, $B(m, n)$ is cyclic, as desired. In terms of shape, consider the LIS and LDS of $B(m, n)$. Since $B(m, n)$ is composed of two descending sequences, it has a LIS of $2$. The LDS can be computed by splitting $B'(m, n)$ into blocks. Namely, $\lc\frac{n+m}{2}\rc$, $I\lp L\lp m-1\rp\rp\oplus(1)$, $I^m\lp L\lp n-m\rp\rp$, and $\lp\lf\frac{m}{2}\rf\rp$. Observe that our second and third blocks both form descending sequences, but all elements in the second block are less than all elements in the third block. Hence, we can use at most one of the two blocks. If we choose to use the second block, we obtain a descending sequence of length $m$, while using the third yields a descending sequence of length $n-m$. Since $m \leq \frac{n}{2}$, our LDS is of length $n-m$. Hence by Schensted's theorem, our shape consists of $m$ rows of length $2$ followed by $n-2m$ rows of length $1$, which yields the claimed shape.
\end{proof}

\begin{example} When $(m, n) = (3, 7)$, $B'(m, n) = (5, 3, 1, 7, 6, 4, 2)$. This permutation corresponds to the SYT
\[\young(12,34,56,7),\]
which has the desired shape. \end{example}

Together, $B(m,n)$ and $B'(m,n)$ establish useful base cases which will help prove the main theorem. Now, consider the following construction.

\begin{definition}
Given a cyclic permutation $\sigma$ and an an integer $k > 1$, we define $A(\sigma, k)$ to be a sequence of length $n+k$ obtained by modifying $\sigma$ as follows: first, replace $n$ in $\sigma$ with $n + \lc\frac{k}{2}\rc$, and then append $I^n\lp L(k)\rp \oplus (n)$.
\end{definition}

\begin{lemma}
For a cyclic permutation $\sigma$ and an integer $k > 1$, $A(\sigma, k)$ is a permutation.
\end{lemma}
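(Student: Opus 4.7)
The plan is to verify that $A(\sigma, k)$ has length exactly $n+k$ and that each integer in $\{1, 2, \ldots, n+k\}$ appears in it exactly once. Since $A(\sigma, k)$ is built from three pieces---the sequence $\sigma$ with its unique entry $n$ replaced by $n + \lc k/2 \rc$, followed by $I^n\lp L(k)\rp$, followed by the singleton $(n)$---I would track the length and set of values contributed by each piece separately.

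For length, the modified $\sigma$ still has $n$ entries since a replacement does not change length, $L(k)$ has $k-1$ entries by its definition and $I^n$ preserves length, and the trailing $(n)$ contributes one more. This gives $n + (k-1) + 1 = n+k$ as required.

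For contents, the modified $\sigma$ realizes the set $\{1, 2, \ldots, n-1\} \cup \{n + \lc k/2 \rc\}$, since $\sigma$ was originally a permutation of $\{1, \ldots, n\}$ with a unique occurrence of $n$. By the definition of $L$, the sequence $L(k)$ realizes $\{1, 2, \ldots, k\} \setminus \{\lc k/2 \rc\}$, so $I^n\lp L(k)\rp$ realizes $\{n+1, n+2, \ldots, n+k\} \setminus \{n + \lc k/2 \rc\}$. The final $(n)$ contributes $\{n\}$. The element $n + \lc k/2 \rc$ supplied by the first piece exactly plugs the hole in the second piece, and $n$ is supplied by the third; the three sets are therefore pairwise disjoint and together equal $\{1, 2, \ldots, n+k\}$. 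Combined with the length count, this shows $A(\sigma, k)$ is a permutation.

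There is essentially no obstacle here: the argument is pure element-tracking, analogous to the proofs of Lemmas~\ref{B(m, n) is a permutation} and \ref{B'(m, n) is a permutation}. The one design observation that makes it work is that the gap at $\lc k/2 \rc$ inside $L(k)$ is precisely the slot occupied, after the shift by $n$, by the replacement value $n + \lc k/2 \rc$ substituted in for $n$. The cyclicity of $\sigma$ does not enter this lemma and will only become relevant in the subsequent lemmas about the shape and cycle structure of $A(\sigma, k)$.
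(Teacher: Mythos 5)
Your proof is correct and follows essentially the same element-tracking argument as the paper: both partition $A(\sigma,k)$ into the modified $\sigma$ and the appended tail, observe that the replacement value $n+\lc k/2\rc$ fills the gap left by $I^n\lp L(k)\rp$ while the trailing $(n)$ restores the removed element, and conclude the union is $\{1,\dots,n+k\}$. Your explicit length count is a minor addition the paper leaves implicit.
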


\begin{proof}
$A(\sigma, k)[1, n]$ contains the elements $\{1, 2, \ldots, n-1\} \cup \left\{n+\lc\frac{k}{2}\rc\right\}$ and $A(\sigma, k)[n+1, n+k]$ contains $\{n, n+1, \ldots, n+k\} \setminus \left\{n+\lc\frac{k}{2}\rc\right\}$. Hence, the elements in $A(\sigma, k)$ are
\[\{1, 2, \ldots, n-1\} \cup \left\{n+\lc\frac{k}{2}\rc\right\} \cup \{n, n+1, \ldots, n+k\} \setminus \left\{n+\lc\frac{k}{2}\rc\right\} = \{1, 2, \ldots, n+k\}.\]
Thus $A(\sigma, k)$ is a permutation.
\end{proof}

\begin{lemma}
\label{K-Add Construction validity}
The permutation $A(\sigma, k)$ is cyclic, and $\Gamma\lp P\lp A(\sigma, k\rp\rp = \Gamma\lp P\lp\sigma\rp\rp + (1^k)$.
\end{lemma}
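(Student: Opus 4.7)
The plan is to verify both claims using the preliminary tools already developed: for the shape equality I will check that $A(\sigma,k)$ is $(n,k)$-tail-monotone and then invoke Theorem~\ref{Descending Sequence Concatenation}, while for the cyclicity I will track how the single cycle of $\sigma$ is rerouted through the appended block using Lemma~\ref{Descending Sequence Path}.

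For the shape, first I would observe that the suffix $A(\sigma,k)[n+1,n+k]=I^n(L(k))\oplus(n)$ is strictly decreasing: by the definition of $L(k)$, the shifted block $I^n(L(k))$ equals $(n+k,n+k-1,\ldots,n+\lceil k/2\rceil+1,n+\lceil k/2\rceil-1,\ldots,n+1)$, which is decreasing, and appending $n$ preserves descent. The first tail entry is $n+k$, the maximum value, and in the first $n$ positions every entry is at most $n$ except at the position $p$ where originally $\sigma_p=n$, which now carries the value $n+\lceil k/2\rceil$. Thus $A(\sigma,k)$ is $(n,k)$-tail-monotone, so Theorem~\ref{Descending Sequence Concatenation} yields $\Gamma(P(A(\sigma,k)))=\Gamma(P(A(\sigma,k)[1,n]))+(1^k)$. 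Finally, the prefix $A(\sigma,k)[1,n]$ is obtained from $\sigma$ by relabeling the largest entry $n$ to the still-largest entry $n+\lceil k/2\rceil$; since the RSK insertion tableau depends only on the relative order of the entries, $\Gamma(P(A(\sigma,k)[1,n]))=\Gamma(P(\sigma))$, and the desired shape formula follows.

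For cyclicity I would work in the functional digraph of $A(\sigma,k)$, whose edges are $i\mapsto A(\sigma,k)_i$. By Lemma~\ref{Descending Sequence Path} applied to $L(k)$ and then shifted by $n$, the edges coming from positions $n+2,\ldots,n+k-1$ form a directed path on $\{n+1,\ldots,n+k-1\}$ running from the unique source $n+\lceil k/2\rceil$ to the unique sink $n+1$. The first tail position contributes the additional edge $(n+1)\mapsto(n+k)$ (since $L(k)_1=k$), and the last tail position contributes $(n+k)\mapsto n$. Among the first $n$ positions, the only edge differing from $\sigma$ is at $p$, which now reads $p\mapsto n+\lceil k/2\rceil$ in place of $p\mapsto n$. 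Stitching the pieces together, the unique cycle of $\sigma$, locally of the form $\cdots\to p\to n\to \sigma_n\to\cdots$, becomes, in $A(\sigma,k)$, the cycle $\cdots\to p\to n+\lceil k/2\rceil\to\cdots\to n+1\to n+k\to n\to\sigma_n\to\cdots$, visiting every element of $\{1,\ldots,n+k\}$ exactly once.

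The main obstacle is the bookkeeping in the cyclicity argument: one must combine the source/sink structure of the path in Lemma~\ref{Descending Sequence Path} with the two ``boundary'' edges $(n+1)\mapsto(n+k)$ and $(n+k)\mapsto n$ so that the detour enters the appended block exactly where $\sigma$'s cycle was broken (at the edge $p\to n$) and exits exactly at the other endpoint, reconnecting to $n$. Once this is established, the remaining verifications are routine, and the shape part reduces immediately to Theorem~\ref{Descending Sequence Concatenation} after tail-monotonicity is recorded.
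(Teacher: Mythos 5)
Your proposal is correct and follows essentially the same route as the paper: cyclicity comes from the path structure of the appended $L(k)$ block (Lemma~\ref{Descending Sequence Path}) together with the rerouted edge at the position originally holding $n$, and the shape comes from Theorem~\ref{Descending Sequence Concatenation}. You supply more detail than the paper's very terse proof --- notably the explicit check of $(n,k)$-tail-monotonicity and the observation that the prefix is order-isomorphic to $\sigma$ --- but the underlying argument is the same.
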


\begin{proof}
In terms of cyclicity, we once again see that $A(\sigma, k)$ is composed of two shifted paths, one containing the indices $i$ for $1 \leq i \leq n$, and the other containing the indices $i$ for $n < i \leq n+k$. Thus, since neither of these paths are cycles, $A(\sigma, k)$ is cyclic, as desired. Further, Theorem \ref{Descending Sequence Concatenation} immediately implies $\Gamma\lp P\lp A(\sigma, k\rp\rp = \Gamma\lp P(\sigma)\rp+(1^k)$.
\end{proof}

\begin{example} Consider the permutation $\sigma = (4, 1, 5, 3, 2)$, which is $B'(2, 5)$. Then
$P(\sigma)$ is the SYT \[\young(12,35,4).\]
Further, $A(\sigma, 3) = (4, 1, 7, 3, 2, 8, 6, 5)$. Then $P(A(\sigma,3))$ is the SYT
\[\young(125,368,47),\]
which has the shape specified in Lemma~\ref{K-Add Construction validity}. \end{example}

We are now ready to prove our main result, Theorem \ref{Cyclic Theorem}.

\begin{proof}[Proof of Theorem \ref{Cyclic Theorem}]


Let $\gamma$ and $\gamma'$ be two shapes, where $\gamma$ is the shape we aim to construct. Define the sequence $\chi$ by $\chi_i = \gamma_i-\gamma'_i$. Then the $A$ construction can be used to construct $\gamma$ from $\gamma'$ iff $\chi$ is nonincreasing and $\chi_1 = \chi_2$. If $\gamma_1 > \gamma_2$, we can use $B$ to construct $\gamma'$: we will let $\gamma' = B(\gamma_1-\gamma_2+1, \gamma_1-\gamma_2+|\gamma|)$, which is the shape consisting of one row of size $\gamma_1-\gamma_2+1$, followed by $|\gamma|-1$ rows of size $1$. For this $\gamma'$ and $i > 1$, it is clear that $\chi_{i+1} \leq \chi_i$. Furthermore, $\chi_1 = \gamma_1-(\gamma_1-\gamma_2+1) = \gamma_2-1$, while $\chi_1 = \gamma_2-1$ as well. Hence $\chi_1 = \chi_2$, and we can construct $\gamma$ from $\gamma'$ using $A$.

Now suppose that $\gamma_1 = \gamma_2$. In this case, $A$ can already construct $\gamma$ if we allowed it to operate on the empty shape. However, it can't, and we thus require our third shape, $B'$. We want $B'$ to generate a shape $\gamma'$, such that the sequence $\chi$ defined by $\chi_i = \gamma_i-\gamma'_i$ is both nonincreasing and satisfies $\chi_1 = \chi_2$. Since it is impossible to construct large cyclic permutations that have a shape contained in a single column, we use $B'$ to generate two column shapes. Specifically, let $q$ be the largest position such that $\gamma_1 = \gamma_q$. Clearly, $q > 1$. Since $\gamma_{q+1} < 
\gamma_q$, if we let $\gamma'$ consist of $q$ $2$'s and $|\gamma|-q$ $1$'s, $\chi$ is both nonincreasing and has equal first two rows. This means that $\gamma' = B'(q, |\gamma|+q)$.

Thus, we can construct a permutation $\sigma$ such that $\Gamma\lp P\lp\sigma\rp\rp = \gamma$ for any $\gamma$ with at least two rows and two columns, completing the proof.
\end{proof}

\section{Almost Cyclic Permutations}
\label{sec:almost}

\begin{definition}
A permutation is defined to be {\em almost cyclic} if its cycle decomposition is $(n-1, 1)$.
\end{definition}

In this section, we will prove the following theorem, which implies that the cycle decomposition $(n-1,1)$ is RSK-complete for odd $n$ but not for even $n>2$.

\begin{theorem}
\label{Almost Cyclic Theorem}
When $n$ is odd, all RS shapes apart from $(1, 1, \ldots, 1)$ and $(n)$ can be achieved using only almost cyclic permutations. When $n$ is even, all RS shapes apart from $(1, 1, \ldots, 1)$, $(n)$, and $(\frac{n}{2}, \frac{n}{2})$ can be achieved using only almost cyclic permutations.
\end{theorem}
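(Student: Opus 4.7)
The plan is to separate the proof into (I) the impossibility of shape $(n/2, n/2)$ when $n$ is even, and (II) constructions for all other non-trivial shapes, closely mirroring the strategy of the proof of Theorem \ref{Cyclic Theorem}.

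For (I), observe that by Theorem \ref{Schensted’s Theorem}, a permutation $\sigma$ of shape $(n/2, n/2)$ has $\mathrm{LDS} = 2$, i.e., $\sigma$ is $321$-avoiding. Suppose such a $\sigma$ is also almost cyclic with fixed point $f$. If $1 < f < n$, then any $i < f$ with $\sigma_i > f$ together with any $j > f$ with $\sigma_j < f$ would produce a length-three decreasing subsequence $(\sigma_i, f, \sigma_j)$, a contradiction. Hence $\sigma$ must restrict separately to permutations of $\{1, \ldots, f-1\}$ and of $\{f+1, \ldots, n\}$, which forces every non-fixed cycle of $\sigma$ to have length at most $\max(f-1, n-f) < n-1$, contradicting almost cyclicity. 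So $f \in \{1, n\}$. When $f = n$, inserting $n$ last appends it to the first row, so $\Gamma(P(\sigma)) = \Gamma(P(\sigma[1, n-1])) + (1, 0, 0, \ldots)$, and matching this against $(n/2, n/2)$ would require $\Gamma(P(\sigma[1, n-1])) = (n/2 - 1, n/2)$, which is not a valid partition. A symmetric argument, using that inserting $1$ first places it permanently at position $(1,1)$, rules out $f = 1$.

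For (II), I follow the case split of the proof of Theorem \ref{Cyclic Theorem}. A key observation is that the $A$ construction preserves almost cyclicity whenever the fixed point of the input $\sigma$ has value strictly less than the size of $\sigma$: since $A(\sigma, k)$ modifies only the value equal to the size before appending a tail, a fixed point at any smaller value remains untouched, and the path-joining argument in the proof of Lemma \ref{K-Add Construction validity} still shows that the original large cycle merges with the new tail into a single cycle. Combined with Theorem \ref{Descending Sequence Concatenation}, this reduces the problem to constructing almost cyclic base permutations whose fixed point lies below the maximum value.

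For shapes $\gamma$ with $\gamma_1 > \gamma_2$, I use the hook base $\gamma' = (\gamma_1 - \gamma_2 + 1, 1^{r-1})$ where $r$ denotes the number of rows of $\gamma$. When $m := \gamma_1 - \gamma_2 + 1 \geq 3$, I take the cyclic permutation $B(m-1, m+r-2)$, shift all of its values up by one, and prepend $1$ as a fixed point at position $1$; since $1$ is the smallest value, it stays at position $(1,1)$ of the $P$-tableau and the shape of the result is $(m, 1^{r-1}) = \gamma'$, with fixed point at the minimum value. The edge case $m = 2$ (the hook $(2, 1^{r-1})$) I handle with a direct construction of the form $(N, N-1, \ldots, k+1, k-1, \ldots, 1, k)$ of size $N = r+1$, choosing $k$ from the parity of $N$ so that the unique fixed point sits near $N/2$ rather than at $N$. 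For shapes $\gamma$ with $\gamma_1 = \gamma_2$, the exclusion of $(n/2, n/2)$ forces $r \geq 3$, so the two-column base $\gamma' = (2^q, 1^{r-q})$ used in the cyclic proof has at least three rows and hence cannot itself coincide with the excluded bad shape of its own size. I expect the most delicate step to be the explicit construction of almost cyclic permutations realizing these two-column base shapes, since the prepending trick does not yield a two-column shape; this will likely require parity-based case analysis of $q$ and $r$ together with careful placement of the fixed point to guarantee a single long cycle plus one fixed point rather than some finer cycle decomposition.
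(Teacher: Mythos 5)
Your overall architecture matches the paper's: base cases plus repeated application of the $A$ construction, using the observation (which the paper uses implicitly in its Case 3) that $A$ preserves almost cyclicity when the fixed point's value is below the maximum. Your handling of hooks is a genuinely different and correct route: the paper realizes $\gamma$ with $\gamma_1>\gamma_2$, $\gamma_1>2$ by taking a cyclic permutation of shape $\gamma-(1)$ and appending the value $n$ at the end, whereas you build an almost cyclic hook base by prepending $1$ to a shifted copy of $B(m-1,m+r-2)$ and then grow it with $A$; both work, and your version has the small advantage of keeping the fixed point at the minimum value so that $A$ applies cleanly afterward. Your part (I) ruling out $(\frac{n}{2},\frac{n}{2})$ is also sound (and is essentially the paper's Lemma~\ref{Fixed Point Breaking} argument), though the theorem as stated only asserts achievability, deferring impossibility to Lemma~\ref{Difference in Row Lengths Fixed Points}.

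The genuine gap is the case $\gamma_1=\gamma_2$. Every such shape requires an almost cyclic permutation realizing a two-column base $(2^q,1^{r-q})$, and you explicitly defer this as ``the most delicate step'' without constructing it. This is not a routine verification: it is the technical heart of the paper's Section~\ref{sec:almost}, occupying the definition of $L'(n)$, Lemma~\ref{Almost Cyclic Path P1} (a parity-split argument showing $(t)\oplus L'(n)$ decomposes into a self-loop plus a path), and the construction $D(m,n)$ with Lemmas~\ref{Almost Cyclic Base Case Validity} verifying both the cycle type $(n-1,1)$ and the shape via an LIS/LDS analysis (including the exceptional pair $(m,n)=(2,4)$, which is exactly the forbidden shape $(2,2)$). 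Your remark that the exclusion of $(\frac{n}{2},\frac{n}{2})$ forces $r\ge 3$ correctly shows the needed base shapes avoid that exception, but without an explicit construction of these two-column almost cyclic permutations the proof is incomplete for all shapes with $\gamma_1=\gamma_2$. The $m=2$ hook edge case $(N,N-1,\ldots,k+1,k-1,\ldots,1,k)$ is likewise only asserted to work for a parity-dependent choice of $k$; it does (it is a close cousin of the paper's $D'(n)=(w)\oplus L'(n)$), but the verification that the non-fixed part forms a single $(N-1)$-cycle is precisely the content of Lemma~\ref{Almost Cyclic Path P1} and needs to be carried out.
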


\begin{remark}
When $n$ is even, the special case where $\gamma = (\frac{n}{2}, \frac{n}{2})$ is, in fact, unachievable for any permutation with a cycle decomposition containing a $1$, which we will show in Lemma \ref{Difference in Row Lengths Fixed Points}.
\end{remark}

The proof of Theorem \ref{Almost Cyclic Theorem} will follow a similar structure to the proof of Theorem \ref{Cyclic Theorem}. We will introduce several new base cases and continue to use $A$. First, we will need the following variation on $L(n)$.

\begin{definition}
\label{lambda' def}
Given any integer $n > 1$, let $p$ be $0$ when $n$ is even and $1$ when $n$ is odd, and let $w$ denote $\lf\frac{n}{2}\rf+2p$. We define $L'(n)$ to be the sequence
\[L'(n) = (n, n-1, \ldots, w+1, w-1, \ldots, 2, 1).\]
\end{definition}

Observe that this sequence is missing the element $w$, hence is of size $n-1$.

\begin{lemma}
\label{Almost Cyclic Path P1}
Let $t$ and $n$ be any two positive integers. Let $\sigma$ denote the sequence $(t)\oplus L'(n)$. Consider the directed graph $G = (V, E)$, where $V = \{1, 2, \ldots, n\}$ and $E = \{(i, \sigma_i):1<i\le n\}$. $G$ consists of a self loop (a cycle of size $1$) and a shifted path. 
\end{lemma}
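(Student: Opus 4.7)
The plan is to mirror the bipartite-style argument of Lemma~\ref{Descending Sequence Path}. The value $t=\sigma_1$ is irrelevant because $E$ only uses edges $(i,\sigma_i)$ with $i \ge 2$, so I only need to analyze the contribution of $L'(n)$. First, I would expand $\sigma_i = L'(n)_{i-1}$ explicitly into two regimes, $\sigma_i = n+2-i$ for $2 \le i \le n-w+1$ and $\sigma_i = n+1-i$ for $n-w+2 \le i \le n$, using $w = \lfloor n/2\rfloor + 2p$ from Definition~\ref{lambda' def}. A direct parity-split calculation identifies the unique index with $\sigma_i = i$ as $v_* := \lceil(n+1)/2\rceil$, accounting for the one and only self-loop in $G$.

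Next I would tabulate degrees in $G$. Since $L'(n)$ is a rearrangement of $\{1,\dots,n\}\setminus\{w\}$, the image $\{\sigma_i:2\le i\le n\}$ is exactly $\{1,\dots,n\}\setminus\{w\}$. Thus $w$ is the unique vertex of in-degree $0$, and (because index $1$ contributes no outgoing edge) $1$ is the unique vertex of out-degree $0$, while every other vertex has in- and out-degree $1$. Removing the self-loop at $v_*$ yields an induced subgraph $G'$ on $V' = V\setminus\{v_*\}$ with unique source $w$, unique sink $1$, and every other vertex of in- and out-degree $1$; so it suffices to prove that $G'$ is acyclic in order to conclude that $G'$ is a simple directed path from $w$ to $1$.

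For acyclicity I would run the bipartite monotonicity argument from Lemma~\ref{Descending Sequence Path}: partition $V'$ into a small side $S = \{1,\dots,v_*-1\}$ and a large side $R = \{v_*+1,\dots,n\}$, verify using the explicit formulas that $\sigma$ swaps $S$ and $R$, and show that $\sigma\circ\sigma$ moves each vertex strictly farther from $v_*$ on each side, giving a strict monotone order that forbids cycles. The main obstacle will be the parity-dependent book-keeping, since the source $w$ sits on opposite sides of $v_*$ in the two cases: $w = n/2 \in S$ for even $n$, versus $w = (n+3)/2 \in R$ for odd $n$. I expect to either run the two parities in parallel or unify the calculations via the parameter $p$. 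Combining these steps, $G$ decomposes into the self-loop at $v_*$ together with a simple path on $V'$ from $w$ to $1$, which is precisely the structure claimed.
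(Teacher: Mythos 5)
Your proposal is correct and follows essentially the same route as the paper: identify the unique fixed point (the paper computes it as $w+1$ for even $n$ and $w-1$ for odd $n$, which agrees with your $v_*=\lceil (n+1)/2\rceil$), split the remaining vertices into the two sides of a bipartite graph that $\sigma$ swaps, and show $N\circ N$ moves vertices strictly outward so no cycle can form, leaving a path. Your explicit degree count (source $w$, sink $1$) is slightly more spelled out than the paper's, but the argument is the same.
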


\begin{figure}
\centering
\begin{tikzpicture}
\filldraw[fill=gray!15!white, draw=gray] (6,6) circle (0.3cm) node {4};
\draw[-latex] (5.7, 6) arc (120:420:0.6) ;
\filldraw[fill=gray!15!white, draw=gray] (4,4) circle (0.3cm) node {3};
\filldraw[fill=gray!15!white, draw=gray] (8,4) circle (0.3cm) node {5};
\draw[-latex] (7.7, 4) -- (4.3, 4);
\draw[-latex] (4.26, 3.87) -- (7.74,2.13);
\filldraw[fill=gray!15!white, draw=gray] (4,2) circle (0.3cm) node {2};
\filldraw[fill=gray!15!white, draw=gray] (8,2) circle (0.3cm) node {6};
\draw[-latex] (7.7, 2) -- (4.3, 2);
\draw[-latex] (4.26, 1.87) -- (7.74,0.13);
\filldraw[fill=gray!15!white, draw=gray] (4,0) circle (0.3cm) node {1};
\filldraw[fill=gray!15!white, draw=gray] (8,0) circle (0.3cm) node {7};
\draw[-latex] (7.7, 0) -- (4.3, 0);
\end{tikzpicture}
\caption{The directed graph corresponding to the sequence $L'(7)$.}
\label{lambda' 7}
\end{figure}
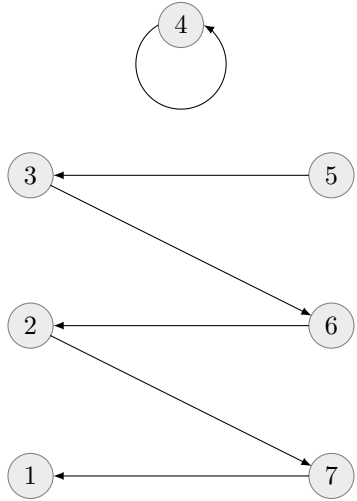

\begin{proof}
First observe that the integer $t$ does not affect the graph, and hence will not play any role in the proof (it will be useful when we invoke this result). We will separate the proof of Lemma \ref{Almost Cyclic Path P1} into two cases.
\begin{description}
    \item[Case 1] $n$ is even. In this case, $w = \frac{n}{2}$, as defined in Definition \ref{lambda' def}. Let $N(i) = \sigma_i$. Then, $N(w+1) = n+2-(w+1) = w+1$. Hence, $w+1$ forms our self loop. With this in mind, let us organize the remaining nodes in $G$ into a bipartite graph, with nodes $1$ through $w$ on the left and nodes $w+2$ through $n$ on the right. For $1 < j \leq w$, $N(j) = n+2-j$, and for $w+2 \leq j \leq n$, $N(j) = n+1-j$. Let $\ell_i = w+1-i$ and $r_i = w+1+i$.
    Then,
    \[N(\ell_i) = n+2-\ell_i = n+2-(w+1-i) = w+1+i = r_i,\]
    and
    \[N(r_i) = n+1-r_i = n+1-(w+1+i) = w+i = \ell_{i+1}.\]
    Therefore, $N(N(\ell_i)) = \ell_{i+1}$, and $N(N(r_i)) = r_{i+1}$. This means that our bipartite graph isn't cyclic and thus forms a path.
    \item[Case 2] $n$ is odd. In this case, $w = \frac{n+3}{2}$, as defined in Definition \ref{lambda' def}. Then, $N(w-1) = n+1-(w-1) = w-1$. Hence, $w-1$ forms our self loop. With this in mind, let us organize the remaining nodes in $G$ into a bipartite graph, with nodes $1$ through $w-2$ on the left and nodes $w$ through $n$ on the right. Let $\ell_i = w-1-i$ and $r_i = w-1+i$. Then, for $1 < j \leq w-2$, $N(j) = n+2-j$, and for $w \leq j \leq n$, $N(j) = n+1-j$.
    Then
    \[N(\ell_i) = n+2-\ell_i = n+2-(w-1-i) = w+i = r_{i+1},\]
    and
    \[N(r_i) = n+1-r_i = n+1-(w-1+i) = w-1-i = \ell_i.\]
    Therefore, $N(N(\ell_i)) = \ell_{i+1}$, and $N(N(r_i)) = r_{i+1}$. This means that our bipartite graph isn't cyclic and thus forms a path.
\end{description}
In both cases, we have a self loop and a path, as desired.
\end{proof}

For example, $(5) \oplus L'(7) = (5, 7, 6, 4, 3, 2, 1)$, which corresponds to the directed graph shown in Figure \ref{lambda' 7}. 

\begin{definition}
\label{D(m, n) definition}
Given positive integers $(m,n) \neq (2, 4)$ such that $1 < m \leq \frac{n}{2}$, let $p$ be $1$ if $m$ is odd and $0$ otherwise, and let $w$ denote $\lf\frac{m}{2}\rf+2p$. Then, we define $D(m,n)$ to be the following concatenation of sequences:
\[D(m, n) = \lp\lc\frac{n+m}{2}\rc\rp\oplus L'(m)\oplus I^m\lp L(n-m)\rp\oplus(w).\]
\end{definition}

\begin{lemma}
For $1 < m \leq \frac{n}{2}$, $D(m, n)$ is a permutation.
\end{lemma}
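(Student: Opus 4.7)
The plan is to mirror the proof of Lemma~\ref{B'(m, n) is a permutation}: confirm that $D(m,n)$ has length $n$, read off the underlying set of values in each of the four concatenated pieces, and verify that these four sets partition $\{1, 2, \ldots, n\}$.

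First, by Definition~\ref{lambda' def} the block $L'(m)$ has length $m-1$, and $L(n-m)$ has length $n-m-1$ by its defining formula, so the total length of $D(m,n)$ is $1 + (m-1) + (n-m-1) + 1 = n$, matching the target. I would then extract the element set of each block: the two singleton blocks contribute $\{\lc(n+m)/2\rc\}$ and $\{w\}$; the block $L'(m)$ contributes $\{1, 2, \ldots, m\} \setminus \{w\}$ directly from Definition~\ref{lambda' def}; and $I^m\lp L(n-m)\rp$ contributes $\{m+1, \ldots, n\} \setminus \{m+\lc(n-m)/2\rc\}$, obtained by translating the description of $L(n-m)$ used in the proof of Lemma~\ref{B(m, n) is a permutation} by $m$. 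A short parity-free check verifies the identity $m + \lc(n-m)/2\rc = \lc(n+m)/2\rc$, so the element missing from the $I^m\lp L(n-m)\rp$ block is precisely the singleton $\lc(n+m)/2\rc$, and the element missing from $L'(m)$ is precisely $w$.

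It remains to confirm disjointness of the four sets, which reduces to the bounds $w \le m$ and $m < \lc(n+m)/2\rc \le n$. The bound $w \le m$ follows from a short case split on the parity of $m$ in Definition~\ref{lambda' def} (for odd $m$ one uses $m \ge 3$, which is forced by $m > 1$), while $m < \lc(n+m)/2\rc \le n$ is immediate from $m < n$, guaranteed by $m \le n/2$. Taking the union then yields $\{1, \ldots, m\} \cup \{m+1, \ldots, n\} = \{1, \ldots, n\}$, and the matching length count forces each value to appear exactly once. I do not anticipate any serious obstacle; the whole argument is careful bookkeeping, and the only mildly delicate point is the $m + \lc(n-m)/2\rc = \lc(n+m)/2\rc$ identity, which is really the heart of the computation.
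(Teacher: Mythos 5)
Your proposal is correct and follows essentially the same route as the paper's proof: decompose $D(m,n)$ into its four blocks, read off their element sets, and observe that their union is $\{1,\ldots,n\}$ (the paper leaves the identity $m+\lc(n-m)/2\rc=\lc(n+m)/2\rc$ and the bounds on $w$ implicit, whereas you verify them explicitly). No substantive difference.
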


\begin{proof}
Take $w$ and $p$ to have values as defined in Definition \ref{D(m, n) definition}. Then, we will once again consider sets of elements. We have the sets $\{\lc \frac{n+m}{2}\rc\}$, $\{1, 2, \ldots, w-1, w+1, \ldots, m-1, m\}$, $\{m+1, m+2, \ldots, \lc\frac{n+m}{2}\rc-1, m+\lc\frac{n+m}{2}\rc+1, \ldots, n-1, n\}$, and $\{w\}$. The union of the second and the fourth sets is $\{1, 2, \ldots, m\}$, and the union of the first and third sets is $\{m+1, m+2, \ldots, n\}$. Thus, taken together, the union of all of our sets is $\{1, 2, \ldots, n\}$.
\end{proof}

\begin{lemma}
\label{Almost Cyclic Base Case Validity}
The permutation $D(m, n)$ is almost cyclic, and $\Gamma\lp P\lp D(m, n)\rp\rp$ consists of $m$ $2$'s followed by $n-2m$ $1$'s.
\end{lemma}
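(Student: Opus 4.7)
The plan is to mirror the proof of Lemma~\ref{Base Case 2 Validity}: prove the cycle type first and then deduce the shape from Schensted's theorem.

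For cyclicity, I would view $D(m,n)$ as a directed graph on $\{1,\ldots,n\}$ and analyze it by blocks. The prefix $D(m,n)[1,m] = (\lceil (n+m)/2 \rceil) \oplus L'(m)$ matches the hypothesis of Lemma~\ref{Almost Cyclic Path P1} (with the lemma's $n$ set to our $m$ and its $t$ set to $\lceil (n+m)/2 \rceil$), so the out-edges of positions $2,\ldots,m$ contribute, within $\{1,\ldots,m\}$, a self-loop at some $w^*$---namely $w^*=m/2+1$ for even $m$ and $w^*=(m+1)/2$ for odd $m$---together with a shifted path from $w$ to $1$. Lemma~\ref{Descending Sequence Path} applied to $L(n-m)$ and then shifted by $I^m$ shows that the out-edges of positions $m+2,\ldots,n-1$ form a shifted path on $\{m+1,\ldots,n-1\}$ running from $\lceil (n+m)/2 \rceil$ to $m+1$. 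The three remaining out-edges are the connectors supplied by the singleton blocks: $1\mapsto\lceil (n+m)/2 \rceil$ from position $1$, $m+1\mapsto n$ from position $m+1$ (since $I^m(L(n-m))_1=n$), and $n\mapsto w$ from position $n$. Splicing the two inner paths via these connectors produces the single orbit
\[
\lceil (n+m)/2 \rceil \to \cdots \to m+1 \to n \to w \to \cdots \to 1 \to \lceil (n+m)/2 \rceil,
\]
which visits exactly $\{1,\ldots,n\}\setminus\{w^*\}$; combined with the self-loop at $w^*$, the cycle decomposition is $(n-1,1)$.

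For the shape, I would invoke Schensted's theorem. Since $L'(m)$ and $I^m(L(n-m))$ are each strictly decreasing and the values of the former lie uniformly below those of the latter, any increasing subsequence of $D(m,n)$ uses at most one element from each of these two long blocks; a brief case analysis over whether the singletons at positions $1$ and $n$ are included yields LIS $=2$, witnessed by $(\lceil (n+m)/2 \rceil, n)$. For the LDS, the suffix $I^m(L(n-m)) \oplus (w)$ is itself strictly decreasing of length $n-m$, and the exclusion $(m,n)\neq(2,4)$ in Definition~\ref{D(m, n) definition} is exactly what prevents the competing subsequence $(\lceil (n+m)/2 \rceil) \oplus L'(m) \oplus (w)$ from exceeding this length; hence LDS $=n-m$. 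Combined with LIS $=2$, a partition of $n$ with at most two columns and exactly $n-m$ parts is forced to have $m$ parts of size $2$ followed by $n-2m$ parts of size $1$.

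The main obstacle is the cyclicity bookkeeping: lining up the start and end of each inner path with the right connector edge so that the splice yields one $(n-1)$-cycle rather than two shorter cycles, and handling the small cases (most notably $m=2$, where $w=1$ and the block-$2$ shifted path degenerates to a single vertex) without losing that structure.
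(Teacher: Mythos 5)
Your proposal is correct and takes essentially the same route as the paper: cyclicity by combining Lemma~\ref{Almost Cyclic Path P1} on the prefix with Lemma~\ref{Descending Sequence Path} on $I^m(L(n-m))$ and splicing via the connector edges, then the shape from Schensted's theorem with LIS $=2$ and LDS $=n-m$, handling $m=2$ via the $(m,n)\neq(2,4)$ exclusion exactly as the paper does. If anything, your explicit tracing of the single orbit is more careful than the paper's terse ``the paths don't form cycles on their own'' argument; the only slight imprecision is that for $m>2$ the sequence $(\lceil(n+m)/2\rceil)\oplus L'(m)\oplus(w)$ is not actually decreasing (since $L'(m)$ ends in $1<w$), but this only strengthens the conclusion that the third block dominates.
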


\begin{proof}
First consider cyclicity. Since $D(m, n)[1, m] = \lp\lc\frac{n+m}{2}\rc\rp\oplus L'(m)$, it forms a path and a self loop by Lemma \ref{Almost Cyclic Path P1}. Similarly, $D(m, n)[m+1, n-1] = I^m\lp L(n-m)\rp$, and forms a shifted path by Theorem \ref{Descending Sequence Path}. Observe that $D(m, n)[m+1, n]$ also forms a shifted path, but not a cycle. Since each node in our graph has in and outdegree $1$, and our paths don't form cycles on their own, $D(m, n)$ contains a cycle and a self loop, giving the desired cycle decomposition of $(n-1, 1)$.

We can, once again, use the LIS and LDS of $D(m, n)$ to determine its shape. Since $D(m, n)$ is composed of two descending sequences, it has a LIS of $2$. To determine its LDS, we split it into four blocks. Namely, $\lp\lc\frac{n+m}{2}\rc\rp$, $L'(m)$, $I^m\lp L(n-m)\rp$, and $(w)$. Once again, the second and third blocks form descending sequences, but all elements of the second block are less than all elements of the third block. Hence, only one of these two blocks may be used in any descending sequence. If we use the third block, we get a descending sequence of length $n-m$.

Now, consider using the second block. If $m > 2$, then $w > 1$ and hence the second block ends with a $1$. In this case, using the second block yields an LDS of length $m$. However, if $m = 2$, then we get an LDS of length $m+1$. Since $n > 4$ when $m = 2$, the third block still provides a longer LDS of $n-m$.

Hence, our LDS is of length $n-m$. Then, by Schensted's theorem, we have two columns and $n-m$ rows. The only valid shape consists of $m$ rows of length $2$ and $n-2m$ rows of length $1$, as claimed.
\end{proof}

\begin{example} If $(m, n) = (3, 7)$, $D(m, n) = (5, 2, 1, 7, 6, 4, 3)$. This sequence corresponds to the following tableau:
\[\young(13,24,56,7)\]
which has the shape specified in Lemma~\ref{Almost Cyclic Base Case Validity}. \end{example}

Let us also introduce a special case of $D$.

\begin{definition}
Given a positive integer $n > 1$, we define $D'(n)$ to be the following concatenation of sequences:
\[D'(n) = (w)\oplus L'(n)\]
where $w$ is defined as in Definition \ref{lambda' def}.
\end{definition}

\begin{lemma}
\label{D' permutation}
For all $n > 1$, $D'(n)$ is a permutation.
\end{lemma}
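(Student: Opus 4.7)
The plan is to verify this statement directly from the definitions of $L'(n)$ and $D'(n)$, since $D'(n)$ is built from $L'(n)$ by prepending the single element $w$ that $L'(n)$ omits. The entire argument reduces to checking that $w$ is a valid element of $\{1, 2, \ldots, n\}$ and accounting for the elements of $L'(n)$.

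First, I would confirm $w \in \{1, 2, \ldots, n\}$ for every $n > 1$. Splitting on parity using Definition \ref{lambda' def}: when $n$ is even, $p = 0$ and $w = n/2$, which satisfies $1 \leq w \leq n$ since $n \geq 2$. When $n$ is odd, $p = 1$ and $w = (n-1)/2 + 2 = (n+3)/2$, which satisfies $w \leq n$ precisely when $n \geq 3$; since $n > 1$ is odd, this holds. So $w$ is always a legitimate element of $\{1, \ldots, n\}$.

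Next I would observe that, by the very definition
\[L'(n) = (n, n-1, \ldots, w+1, w-1, \ldots, 2, 1),\]
the set of entries of $L'(n)$ is $\{1, 2, \ldots, n\} \setminus \{w\}$, each appearing exactly once, and $|L'(n)| = n - 1$. Prepending the single element $w$ yields
\[D'(n) = (w) \oplus L'(n),\]
whose multiset of entries is $\{w\} \cup \bigl(\{1, 2, \ldots, n\} \setminus \{w\}\bigr) = \{1, 2, \ldots, n\}$, with each element appearing exactly once. Since $|D'(n)| = n$, this is a permutation of $\{1, 2, \ldots, n\}$.

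There is no real obstacle here; the only thing to be careful about is ensuring the edge cases $n = 2$ and $n = 3$ behave correctly so that $w$ is genuinely an element that $L'(n)$ skips, which the parity check above handles. The lemma is essentially bookkeeping, and the proof should occupy only a few lines.
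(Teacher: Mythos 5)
Your proof is correct and follows essentially the same approach as the paper's: both observe that $L'(n)$ contains exactly $\{1,\ldots,n\}\setminus\{w\}$ and that prepending $w$ therefore yields a permutation. Your extra parity check that $w\in\{1,\ldots,n\}$ is harmless additional bookkeeping that the paper leaves implicit.
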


\begin{proof}
The set of elements contained in $L'(n)$ is $\{1, 2, \ldots, n\} \setminus \{w\}$, hence the set of elements contained in $D'(n)$ is $\{1, 2, \ldots, n\}$. Since $L'(n)$ is of length $n-1$, the sequence $D'(n)$ forms a permutation.
\end{proof}

\begin{lemma}
\label{Almost Cyclic Base Case Variation}
The permutation $D'(n)$ is almost cyclic and $\Gamma\lp P\lp D'(n)\rp\rp = (2, 1, \ldots, 1)$.
\end{lemma}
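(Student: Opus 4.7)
The plan is to establish almost-cyclicity via Lemma \ref{Almost Cyclic Path P1} (applied with $t=w$) and to pin down the RSK shape via Schensted's Theorem, exactly mirroring how Lemma \ref{Almost Cyclic Base Case Validity} was proved.

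First, for the cycle decomposition: since $D'(n) = (w)\oplus L'(n)$ is of the form $(t)\oplus L'(n)$ with $t=w$, Lemma \ref{Almost Cyclic Path P1} tells us the directed graph on $\{1,\dots,n\}$ with edge set $\{(i,D'(n)_i):1<i\le n\}$ decomposes into one self-loop (a fixed point, contributing the $1$ to the cycle type) together with a shifted path on the remaining $n-1$ vertices. The only edge missing from the full permutation graph is the edge out of position $1$, which goes to $\sigma_1=w$. A routine in/out-degree count shows that, in the restricted graph, the unique vertex of out-degree $0$ is position $1$ and the unique vertex of in-degree $0$ is $w$; so the shifted path must run from $w$ to $1$. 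Adding back the missing edge $(1,w)$ therefore closes the path into a single cycle of length $n-1$, leaving the self-loop untouched. Hence the cycle decomposition is $(n-1,1)$ and $D'(n)$ is almost cyclic.

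Next, for the shape: I would apply Schensted's Theorem (Theorem \ref{Schensted’s Theorem}) to the explicit sequence
\[D'(n) \;=\; (w,\,n,\,n-1,\,\dots,\,w+1,\,w-1,\,\dots,\,2,\,1),\]
whose tail $L'(n)$ is a strictly decreasing sequence of length $n-1$ on $\{1,\dots,n\}\setminus\{w\}$. The full tail realizes an LDS of length $n-1$, and this is best possible since prepending $w$ can only extend a tail subsequence whose first element is smaller than $w$. For the LIS, any increasing subsequence that avoids position $1$ lies in the decreasing tail and so has length $1$; any increasing subsequence that uses position $1$ consists of $w$ followed by at most one tail element larger than $w$ (there is at least one, namely $n$, since $w<n$ in the non-degenerate range), giving LIS $=2$. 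Schensted's Theorem then forces the first row to have length $2$ and the first column to have length $n-1$; the only partition of $n$ satisfying both constraints is $(2,1,\dots,1)$ with $n-2$ trailing $1$'s, as claimed.

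The main obstacle is the degree bookkeeping in the first paragraph: one has to be careful that the shifted path handed to us by Lemma \ref{Almost Cyclic Path P1} really has endpoints $w$ and $1$ in that order, so that restoring the single missing edge out of position $1$ closes a genuine $(n-1)$-cycle (rather than, say, merging the self-loop into a larger structure or producing two cycles). Once this endpoint identification is in place, the shape argument is a short LIS/LDS computation, and the minor edge case $w=n$ (which only happens at very small $n$) can be noted separately without affecting the generic proof.
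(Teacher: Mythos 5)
Your proof is correct, and it splits from the paper halfway through. For almost-cyclicity you and the paper both rely on Lemma~\ref{Almost Cyclic Path P1}; the paper simply cites it, whereas you supply the in/out-degree bookkeeping showing that restoring the single edge $1 \to w$ closes the shifted path (which runs from $w$ to $1$) into an $(n-1)$-cycle without touching the self-loop. That detail is implicit in the paper and worth making explicit, since the lemma by itself only describes the graph with the edge out of position $1$ deleted. For the shape, however, the paper takes a different route: it observes that $D'(n) = (w)\oplus L'(n)$ is $(1,\,n-1)$-tail-monotone and invokes Theorem~\ref{Descending Sequence Concatenation} to get $\Gamma(P(D'(n))) = (1) + (1^{n-1}) = (2,1,\dots,1)$ in one line, while you compute the LIS and LDS directly and finish with Schensted's Theorem. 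Both are valid; the paper's version reuses machinery already built for the $A$ construction, while yours is more self-contained and makes the two-row/two-column structure visible without the tail-monotone formalism. One caveat that applies to both arguments: when $w = n$ (i.e.\ $n = 3$), the tail $L'(n)$ no longer begins with $n$, the tail-monotone hypothesis fails, and indeed $D'(3) = (3,2,1)$ has shape $(1,1,1)$; you flag this degenerate range explicitly, which the paper does not, though you should note that the lemma's conclusion genuinely fails there rather than merely requiring a separate check.
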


\begin{proof}
We can see that $D'(n)$ is almost cyclic by Lemma \ref{Almost Cyclic Path P1}, and the shape follows from Theorem \ref{Descending Sequence Concatenation}.
\end{proof}

We can now prove Theorem \ref{Almost Cyclic Theorem}.

\begin{proof}[Proof of Theorem \ref{Almost Cyclic Theorem}]
Let us again consider any shape $\gamma$ with at least two rows, at least two columns, and not of the form $(\frac{n}{2}, \frac{n}{2})$. We will separate the proof into three cases.
\begin{description}
    \item[Case 1] $\gamma = (2, 1, \ldots, 1)$. The almost cyclic permutation with this shape is $D'(n)$.
    \item[Case 2] $\gamma_1 > \gamma_2$ and $\gamma_1 > 2$. In this case, we can consider the sequence $\gamma'$ where $\gamma'_1 = \gamma_1-1$, and $\gamma'_i = \gamma_i$ for $i > 1$. This sequence clearly represents a shape with at least two columns and at least two rows, and thus by Theorem \ref{Cyclic Theorem}, we can create a cyclic permutation $\sigma$ with the shape $\gamma'$. Then the permutation $\sigma\oplus(|\sigma|+1)$ has the shape $\gamma'+(1)$, which is $\gamma$.
    \item[Case 3] $\gamma_1 = \gamma_2$. Once again, since all of our constructions create or maintain almost cyclic permutations, we will only consider shapes. $A$ adds $1$ to the length of the first $k > 1$ rows of any nonempty shape, while $D$ constructs some base case shape $\gamma'$. Consider the shape $\chi$ defined by $\chi_i = \gamma_i-\gamma_i$. We again need this to be nonincreasing with $\chi_1 = \chi_2$, and we will use the exact same base case shape $\gamma'$ as in Theorem \ref{Cyclic Theorem}. Namely, let $q$ be the largest position with $\gamma_1 = \gamma_q$. Then the shape consisting of $q$ $2$'s followed by $|\gamma|-q$ $1$'s is a valid base case, and this can be constructed by setting $\gamma' = D(q, |\gamma|+q)$.
\end{description}
In every case, we have constructed a permutation with shape $\gamma$.
\end{proof}

\section{The Converse}
\label{sec:converse}

In this section, we will complete the proof of our main theorem:

\begin{theorem}
\label{Converse}
For even $n$, the only RSK-complete cycle decomposition is $(n)$. For odd $n$, only the cycle decompositions $(n)$ and $(n-1, 1)$ are RSK-complete.
\end{theorem}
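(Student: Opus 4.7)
The plan is to exhibit, for every cycle decomposition other than $(n)$ and (for odd $n$) $(n-1, 1)$, a specific non-trivial shape that no permutation with that decomposition can realize. The argument splits naturally into two main cases according to the number $k$ of non-trivial cycles (cycles of length at least $2$), with a small parity-dependent sub-case; the trivial identity decomposition $(1^n)$ only realizes the shape $(n)$ and is handled separately.

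For decompositions with $k \ge 2$, I plan to prove the bound $\operatorname{LIS}(\sigma) \le n - k$. The key observation is that a single cycle $C$ of length $c \ge 2$ contributes at most $c - 1$ terms to any increasing subsequence of $\sigma$: if all $c$ elements of $C$ appeared at increasing positions with increasing values, then since the positions of $C$ and the values of $C$ coincide as sets (the cycle is an invariant subset), matching the two sorted orders would force $\sigma$ to act as the identity on $C$, contradicting $c \ge 2$. Summing this bound over all non-trivial cycles and adding the trivial one-per-fixed-point contribution yields the global bound, and for $k \ge 2$ this gives $\gamma_1 \le n - 2$ by Schensted's theorem (Theorem~\ref{Schensted’s Theorem}), ruling out the hook shape $(n - 1, 1)$.

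For decompositions with $k = 1$, the form is $(m, 1^f)$ with $m = n - f \ge 2$ and $f \ge 1$. I will show that the nearly balanced two-row shape $\gamma^\star = (\lceil n/2 \rceil, \lfloor n/2 \rfloor)$ is unachievable unless $f = 1$ and $n$ is odd. Any realization of $\gamma^\star$ has $\operatorname{LDS} = 2$ and so is $321$-avoiding. The structural lemma I will establish is: if $\sigma$ is $321$-avoiding and $\sigma_p = p$, then $\sigma_i < p$ for every $i < p$ and $\sigma_i > p$ for every $i > p$. Indeed, any violation would produce (by bijectivity) indices $i < p < j$ with $\sigma_i > p$ and $\sigma_j < p$, giving the pattern $(\sigma_i, \sigma_p, \sigma_j)$. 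Iterating across the $f$ fixed points, the single non-trivial cycle must occupy one whole block of $m$ consecutive positions, with the remaining inter-fixed-point intervals empty. A row-insertion analysis---inserting the cycle block first, then appending each of the $f$ strictly larger fixed-point values---yields $\Gamma(P(\sigma)) = (\alpha_1 + f, \alpha_2)$, where $(\alpha_1, \alpha_2)$ is a two-row shape of total size $m$, so $\alpha_2 \le \lfloor m/2 \rfloor = \lfloor (n-f)/2 \rfloor$. Matching $\gamma_2 = \lfloor n/2 \rfloor$ then requires $\lfloor (n-f)/2 \rfloor \ge \lfloor n/2 \rfloor$, which fails whenever $f \ge 2$, and also whenever $f \ge 1$ with $n$ even. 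The latter subcase recovers Lemma~\ref{Difference in Row Lengths Fixed Points} and handles the even-$n$ almost-cyclic decomposition $(n-1, 1)$.

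The main obstacle I anticipate is making the $321$-avoidance splitting lemma and the row-wise additivity of RSK shapes fully rigorous---specifically, carefully showing that inserting a sequence of values all strictly larger than the current tableau appends that sequence's own RSK shape row-wise---since this is the step that converts the combinatorial fixed-point split into the numerical bound $\alpha_2 \le \lfloor (n-f)/2 \rfloor$ that closes the argument.
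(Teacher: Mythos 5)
Your proposal is correct, and its overall strategy matches the paper's: exhibit the hook shape $(n-1,1)$ as an obstruction for decompositions with two or more nontrivial cycles, and a balanced two-row shape as an obstruction for decompositions of the form $(m,1^f)$ with too many fixed points. The one genuinely different ingredient is your first step. The paper's Lemma~\ref{Cycle Decomposition Shape Classification} starts from the shape and works backward: a permutation whose LIS equals $n-1$ is the identity with one element relocated, hence a cyclic shift of an interval, hence of cycle type $(k,1^{n-k})$. You instead prove the forward bound $\operatorname{LIS}(\sigma)\le n-k$ by observing that an invariant $c$-element subset, whose positions and values coincide as sets, cannot contribute all $c$ of its elements to an increasing subsequence unless they are all fixed points. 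This is a clean and slightly more general statement that yields the paper's classification as a corollary. Your second step is essentially the paper's Lemmas~\ref{Fixed Point Breaking} and~\ref{Difference in Row Lengths Fixed Points} in different packaging: since $\gamma_1+\gamma_2=n$ for a two-row shape, your bound $\gamma_2\le\lfloor(n-f)/2\rfloor$ is equivalent to the paper's $\gamma_1\ge\gamma_2+f$. One small repair to your writeup: the fixed points need not all be strictly larger than the cycle block (some sit to its left), so the ``insert the block, then append larger values'' description is not literally the insertion order; but no tableau manipulation is needed here --- the identity $\gamma_1=f+\alpha_1$ follows directly from Schensted's theorem, because any increasing subsequence meets the block in an increasing subsequence of the block and can absorb all $f$ fixed points, and then $\gamma_2=n-\gamma_1=\alpha_2$ since the shape has exactly two rows. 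With that substitution, the obstacle you anticipate in your final paragraph disappears.
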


We have already proven that the cycle decomposition $(n)$ is RSK-complete for all $n$ and the cycle decomposition  $(n-1, 1)$ is RSK-complete for odd $n$ in sections~\ref{sec:cyclic} and ~\ref{sec:almost}. In this section, we will show that no other cycle decomposition is RSK-complete. First, consider the following operation on a sequence.

\begin{definition}
Given a positive integer $f$ and a sequence $\sigma$ that does not contain $f$, define $R(\sigma, f)$ to be the sequence $\sigma'$ of the same size as $\sigma$ constructed as follows:
\begin{equation}
    \sigma'_i =
        \begin{cases}
            \sigma_i & \text{if } \sigma_i < f \\
            \sigma_i-1 & \text{if } \sigma_i > f.
        \end{cases}
\end{equation}
\end{definition}

Now, let us describe the cycle decomposition of permutations that can achieve the shape $(n-1, 1)$.

\begin{lemma}
\label{Cycle Decomposition Shape Classification}
All permutations that can achieve the shape $(n-1, 1)$ have cycle decompositions of the form $(k, 1, 1, \ldots, 1)$, for some $1 < k \leq n$.
\end{lemma}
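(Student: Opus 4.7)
The plan is to use Schensted's theorem to pin down $\sigma$ completely from the constraint $\Gamma(P(\sigma)) = (n-1,1)$, and then read off the cycle decomposition. The shape $(n-1,1)$ forces the first row length, i.e.\ the LIS of $\sigma$, to equal $n-1$. An increasing subsequence of length $n-1$ in a permutation of $\{1,\ldots,n\}$ is obtained by deleting a single entry $v := \sigma_j$; the remaining $n-1$ entries, which are a permutation of $\{1,\ldots,n\}\setminus\{v\}$ appearing in increasing order of position, must therefore be exactly $1,2,\ldots,v-1,v+1,\ldots,n$ in that order. Thus $\sigma$ is completely determined by the pair $(j,v)$: it is the sequence obtained by inserting the value $v$ into position $j$ of $(1,2,\ldots,v-1,v+1,\ldots,n)$.

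I would then split into two parallel cases according to whether $j \le v$ or $j > v$. In the case $j \le v$, a direct unfolding of the description above gives $\sigma_i = i$ for $i < j$ and for $i > v$, together with $\sigma_j = v$ and $\sigma_i = i-1$ for $j < i \le v$. This exhibits the single nontrivial cycle $j \mapsto v \mapsto v-1 \mapsto \cdots \mapsto j+1 \mapsto j$ on the window $\{j, j+1, \ldots, v\}$, with every other index a fixed point. The case $j > v$ is entirely symmetric, yielding one nontrivial cycle on the window $\{v, v+1, \ldots, j\}$ of length $j-v+1$ together with fixed points elsewhere.

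Finally, since $\sigma$ is not the identity (its shape would otherwise be $(n)$, contradicting the hypothesis), we have $j \ne v$, and the unique nontrivial cycle has length $k := |v-j|+1 \ge 2$. Hence the cycle decomposition of $\sigma$ is of the form $(k,1,1,\ldots,1)$ with $1 < k \le n$, as claimed. I do not foresee any real obstacle: the whole argument rests on the observation that the condition LIS $= n-1$ rigidifies $\sigma$ up to the pair $(j,v)$, after which the cycle structure can simply be read off. The operation $R(\sigma,f)$ introduced just before the lemma plays no role here, but will presumably be needed later to argue about the shapes attainable \emph{after} stripping away the fixed points guaranteed by this lemma.
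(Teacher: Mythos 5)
Your proof is correct and takes essentially the same approach as the paper: both invoke Schensted's theorem to force the LIS to be $n-1$, identify $\sigma$ as the identity permutation with a single entry displaced (a cyclic shift of one interval), and read off the unique nontrivial cycle on that interval. Your write-up merely makes explicit, via the pair $(j,v)$, what the paper's proof states more tersely.
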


\begin{proof}
Consider any permutation $\sigma$ with the desired shape. Then, by Schensted's Theorem, the LIS of $\sigma$ is $n-1$. Thus, all but one element are in the same relative order as the identity permutation. Hence, we form $\sigma$ by performing a cyclic shift on some subrange $[i, j]$ of the identity permutation. Here, we either move element $i$ to position $j$ and move elements $i+1,\ldots, j$ to one position lower, or we move element $j$ to position $i$ and move elements $i,\ldots, j-1$ to one position higher. Then, the subrange $[i, j]$ forms a cycle while the rest of the elements are fixed points, so the resulting cycle decomposition would be $(|j-i+1|, 1, 1, \ldots, 1)$.
\end{proof}

Lemma~\ref{Cycle Decomposition Shape Classification} significantly restricts the structure of any RSK-complete cycle decompositions. We will now show that this restricted class does not contain any RSK-complete cycle decompositions other than the ones that we have already identified.

\begin{lemma}
\label{Fixed Point Breaking}
Given a permutation $\sigma \in S_n$ such that $|\Gamma\lp P\lp\sigma\rp\rp| = 2$ (i.e.\ the RSK shape of $\sigma$ has two rows), and an integer $1 \le i \le n$ such that $\sigma_i = i$ (i.e.\ $i$ is a fixed point of the permutation), we must have  $\sigma_j < i$ for all $1 \leq j < i$.
\end{lemma}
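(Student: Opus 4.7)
The plan is a proof by contradiction via Schensted's Theorem. Since $\Gamma(P(\sigma))$ has exactly two rows, its first column has length $2$, so by Theorem~\ref{Schensted’s Theorem} the longest decreasing subsequence of $\sigma$ has length exactly $2$. I will suppose, for contradiction, that the conclusion fails: that there is some $j$ with $1 \leq j < i$ and $\sigma_j \geq i$. Since $\sigma_i = i$ already accounts for the value $i$, this forces $\sigma_j > i$. The goal is then to exhibit a decreasing subsequence of length $3$ in $\sigma$, contradicting the LDS bound.

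The key step is a short pigeonhole argument to find a ``small'' value sitting to the right of position $i$. Among the $i-1$ positions $\{1, 2, \ldots, i-1\}$, position $j$ is already occupied by a value larger than $i$, so only $i-2$ of these positions can hold values from $\{1, 2, \ldots, i-1\}$. The $i-1$ distinct values in $\{1, 2, \ldots, i-1\}$ must each appear somewhere in $\sigma$, and none of them can appear at position $i$ (which holds the value $i$). Hence at least one such value $v \in \{1, 2, \ldots, i-1\}$ must occupy some position $k > i$, giving $\sigma_k = v < i$.

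Combining these observations yields three indices $j < i < k$ with $\sigma_j > i = \sigma_i > v = \sigma_k$, a strictly decreasing subsequence of length $3$ in $\sigma$. This contradicts the fact that the LDS of $\sigma$ equals $2$, completing the proof.

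The main obstacle is conceptual rather than computational: looking only at positions to the left of the fixed point $i$ yields a decreasing pair $\sigma_j > \sigma_i$ but nothing longer, so one has to balance the ``missing'' small value at position $j$ against a corresponding ``missing'' slot among positions $\{1, \ldots, i-1\}$, which by pigeonhole forces some value $v < i$ out past position $i$ and supplies the third term of the decreasing subsequence.
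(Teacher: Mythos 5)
Your proof is correct and follows essentially the same route as the paper's: both arguments use a pigeonhole count to produce a value less than $i$ appearing at a position to the right of $i$, assemble the decreasing subsequence $\sigma_j > \sigma_i > \sigma_k$, and invoke Schensted's Theorem to contradict the two-row hypothesis. Your write-up just spells out the pigeonhole step in slightly more detail than the paper does.
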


\begin{proof}
Let $\sigma_j$ be the largest element for $1 \leq j < i$. By the pigeonhole principle, if $\sigma_j > i$, there is some other $j'$ such that $i < j' \leq n$ and $\sigma_{j'} < i$. This would then create a descending sequence $(j, i, j')$ of length $3$. Then, by Schensted's Theorem, $|\Gamma\lp P\lp\sigma\rp\rp| \geq 3$.
\end{proof}

\begin{lemma}
\label{Difference in Row Lengths Fixed Points}
Any permutation $\sigma$ with $|\Gamma\lp P\lp\sigma\rp\rp| = 2$ and whose cycle decomposition contains $r$ $1$'s must have $\Gamma\lp P\lp\sigma\rp\rp_1 \geq \Gamma\lp P\lp\sigma\rp\rp_2+r$.
\end{lemma}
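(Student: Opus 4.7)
The plan is to decompose $\sigma$ at its fixed points into value-separated blocks and to show that each fixed point forces the first row of the shape to exceed the second by at least one additional unit.

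First, I would let $i_1 < i_2 < \cdots < i_r$ denote the fixed points of $\sigma$ and apply Lemma~\ref{Fixed Point Breaking} at each $i_k$ to conclude that $\sigma[1, i_k-1]$ is a permutation of $\{1, \ldots, i_k-1\}$. Iterating this observation partitions $\sigma$ as a concatenation $B_0 \oplus B_1 \oplus \cdots \oplus B_r$ of consecutive blocks whose value sets are consecutive integer intervals satisfying $\max B_{k-1} < \min B_k$, and such that for each $k \ge 1$ the block $B_k$ begins with its minimum element, namely the fixed point $i_k$.

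Next, I would exploit this value-separation. Every increasing subsequence of $\sigma$ restricts to an increasing subsequence on each block, and conversely concatenating an increasing subsequence from each block (in block order) yields an increasing subsequence of $\sigma$; so by Schensted's theorem, $\Gamma(P(\sigma))_1 = \sum_k \Gamma(P(B_k))_1$. Since $\Gamma(P(\sigma))$ has exactly two rows, each $\Gamma(P(B_k))$ does too, and combining with $\sum_k |B_k| = n$ yields $\Gamma(P(\sigma))_2 = \sum_k \Gamma(P(B_k))_2$ as well.

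For the final inequality, I would observe that for each $k \ge 1$ the block $B_k$ begins with its minimum $i_k$, so RSK insertion places $i_k$ at position $(1,1)$ and never disturbs it; hence $P(B_k)$ is obtained from the RSK tableau of the subsequence $B_k$ with $i_k$ removed by prepending $i_k$ to row~$1$, giving $\Gamma(P(B_k))_1 - \Gamma(P(B_k))_2 \ge 1$. For $B_0$ the trivial partition inequality $\Gamma_1 \ge \Gamma_2$ suffices, and summing over all blocks produces $\Gamma(P(\sigma))_1 - \Gamma(P(\sigma))_2 \ge r$. The main subtlety I anticipate is justifying the additivity of \emph{both} row lengths across value-separated blocks: the first-row equality is immediate from Schensted's theorem, but the second-row equality also depends on being in the two-row regime, which is where the hypothesis $|\Gamma(P(\sigma))| = 2$ is genuinely used.
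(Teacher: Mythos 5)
Your proof is correct, and it takes a recognizably different route from the paper's. The paper argues by induction on $r$: it picks a single fixed point $i$, uses Lemma~\ref{Fixed Point Breaking} to see that deleting $i$ (via the relabelling operation $R$) lowers the LIS by exactly one while lowering the total length by one, so the first row shrinks by one and the second row is unchanged, and then invokes the inductive hypothesis. You instead cut at all $r$ fixed points at once, producing position- and value-separated blocks, and combine three facts: additivity of the first row (LIS) across such blocks; additivity of the second row, which, as you correctly flag, is where the two-row hypothesis enters (so that each block's size is the sum of its two row lengths); and a per-block strict inequality coming from each $B_k$ ($k\ge 1$) opening with its minimum. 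That third ingredient is something the paper never needs, since its induction bottoms out at the trivial $r=0$ case. Your justification of it --- ``$i_k$ sits at $(1,1)$ and is never bumped, hence $P(B_k)$ is the tableau of $B_k$ minus $i_k$ with $i_k$ prepended to row one'' --- is stated as if the second clause followed from the first, when it actually needs the small additional observation that an entry smaller than every later entry never changes which element of row one gets bumped; this is a routine induction on insertions (and the shape consequence alone also follows from Schensted applied to $\mathrm{LIS}$ together with a union-of-two-increasing-subsequences count), so it is a presentational rather than a mathematical gap. The trade-off: the paper's induction is shorter, while your decomposition makes explicit exactly where each of the $r$ units of slack in $\Gamma_1-\Gamma_2$ originates and isolates the precise role of the hypothesis $|\Gamma(P(\sigma))|=2$.
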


\begin{proof}
We will prove this inductively. Clearly, this is true for $r = 0$. For $r > 0$, consider any $i$ for which $\sigma_i = i$. Then, by Lemma \ref{Fixed Point Breaking}, for all $1 \leq j \leq n$, if $j < i$, $\sigma_j < \sigma_i$, and if $j > i$, $\sigma_j > i$. If we let $\sigma' = R(\sigma[1, i-1]\oplus\sigma[i+1, n], i)$, this means that the LIS of $\sigma$ is one greater than the LIS of $\sigma'$. Thus, by Schensted's theorem, $\Gamma\lp P\lp\sigma\rp\rp_1 = \Gamma\lp P\lp\sigma'\rp\rp_1+1$, completing the induction.
\end{proof}

Finally, we can prove Theorem \ref{Converse}.

\begin{proof}[Proof of Theorem \ref{Converse}]
We will separate the proof of Theorem \ref{Converse} into two cases.
\begin{description}
    \item[Case 1] $n$ is even. In this case, by Lemma \ref{Cycle Decomposition Shape Classification}, all permutations, apart from cyclic ones, that have a shape of $(n-1, 1)$ have a cycle decomposition containing at least $1$ self loop. Thus, by Lemma \ref{Difference in Row Lengths Fixed Points}, such a cycle decomposition cannot have a permutation with shape $(\frac{n}{2}, \frac{n}{2})$, as desired.
    \item[Case 2] $n$ is odd. In this case, by Lemma \ref{Cycle Decomposition Shape Classification}, all permutations, apart from cyclic and almost cyclic ones, that have a shape of $(n-1, 1)$ have a cycle decomposition containing at least $2$ self loops. Thus, by Lemma \ref{Difference in Row Lengths Fixed Points}, such a cycle decomposition cannot have a permutation with shape $(\frac{n+1}{2}, \frac{n-1}{2})$.
\end{description}

Thus, for even $n$, only cyclic permutations can achieve all shapes, and for odd $n$, only cyclic and almost cyclic permutations can achieve all shapes, completing the proof.
\end{proof}

\bibliographystyle{alpha}
\bibliography{refs}
\end{document}